\DeclareMathAlphabet{\mathpzc}{OT1}{pzc}{m}{it}
\newcommand{\Dx}{\Delta x}
\newcommand{\Ccal}{\mathcal{C}}
\newcommand{\Ical}{\mathcal{I}}
\newcommand{\Ncal}{\mathcal{N}}
\newcommand{\Ocal}{\mathcal{O}}
\newcommand{\Pcal}{\mathcal{P}}
\newcommand{\Rcal}{\mathcal{R}}
\newcommand{\Scal}{\mathcal{S}}
\newcommand{\phit}{\tilde\phi}
\newcommand{\Ct}{\tilde \Ccal}
\newcommand{\sh}{h^{b}}
\newcommand{\su}{u^{b}}
\newcommand{\sv}{v^{b}}
\newcommand{\sq}{q^{b}}
\newcommand{\Fr}{\mathcal{F}}
  \newtheorem{df}{Definition}[section]
  \newtheorem{prop}[df]{Proposition}
  \theoremstyle{definition}
  \newtheorem{remark}[df]{Remark}
\theoremstyle{definition}
 \theoremstyle{definition}
\begin{document}

\title[Open canals flow with fluvial to torrential phase transitions on networks]{
Open canals flow with fluvial to torrential phase transitions on networks
}
\author{{Maya Briani}}\thanks{Istituto per le Applicazioni del Calcolo ``M. Picone'', Consiglio Nazionale delle Ricerche, Via dei Taurini 19, 00185 Rome, Italy (m.briani@iac.cnr.it)}
\author{{Benedetto Piccoli}}\thanks{Department of Mathematical Sciences, Rutgers University--Camden 311 N. 5th Street Camden, NJ 08102 (piccoli@camden.rutgers.edu)}

\date{}

\keywords{hyperbolic systems, Riemann problem, shallow-water equations, open canal network, supercritical and subcritical flow regimes.}
\maketitle

\begin{abstract} 
Network flows and specifically open canal flows can be modeled by
systems of balance laws defined on topological graphs.
The shallow water or Saint-Venant system of balance laws is one of the most used model
and present two phases: fluvial or sub-critical and torrential or super-critical.
Phase transitions may occur within the same canal but transitions related
to networks are less investigated.
In this paper we provide a complete characterization of possible phase transitions
for a simple network with two canals and one junction.
Our analysis allows the study of more complicate scenarios.
Moreover, we provide some numerical simulations to show the theory at work.
\end{abstract}

\section{Introduction}
The dynamics of network flows is usually modelled by systems of
Partial Differential Equations (briefly PDEs), most of time balance laws.
The dynamics is defined on a topological graph with evolution on arcs given
by system of PDEs, while additional conditions must be assigned at network nodes,
e.g. conservation of mass and momentum. 
There is a large literature devoted to these problems and we refer to \cite{BCGHP14}
for a extensive survey and for additional references.

In particular, here we focus on water flows on a oriented network of open canals
and the model given by Saint-Venant or shallow water equations.
The latter form a non linear system of balance laws
composed by a mass and momentum balance laws. 
In water management problems, these equations are often used as a fundamental tool to describe the dynamics of canals and rivers, 
see \cite{BBDLP09} and papers in same volume,
and various control techniques were proposed, see \cite{BC16,BCDA09,GL09,GL13,Hante17,LITRICO2005,PW18} and references therein.
Moreover, the need of dynamic models in water management
is well documented, see \cite{Milly573}.
The shallow water system is hyperbolic (except when water mass vanishes)
and has two genuinely nonlinear characteristic fields.
Moreover, it exhibits two regimes: \emph{fluvial} or sub-critical, when
one eigenvalue is negative and one positive, and \emph{torrential} or super-critical, 
when both eigenvalues are positive.
This is captured by the so called \emph{Froude number}, see (\ref{eq:FN}).
For a complete description of the physics of the problem one needs to supply the equations with conditions at nodes, which represent junctions.
The junction conditions are originally derived by engineers in the modeling of the dynamic of canals and rivers. 
The first and most natural condition is the
conservation of water mass which is expressed as the equality between the sum of fluxes from the incoming canals
and that from outgoing ones. 
One single condition is not sufficient to isolate a unique solution,
thus different additional condition were proposed in the literature.
Physical reasons motivate different choices of conditions, among which the equality of water levels, of energy levels and conservation of energy.
For the assessment of coupling conditions on canals networks and for more details on the existence of solutions in the case of subcritical flows, we refer the reader to \cite{CHS2008,GLS2004,G2005,HS2013,Kreiss2013,LS2002,Marigo2010}. For discussion on supercritical flow regimes, we refer the reader to \cite{GLS2004} and references there in.\\
Then, to construct solutions one may resort to the concept
of Riemann solver at a junction, see \cite{GP09}. A Riemann solver at a junction
is a map assigning solutions to initial data which are constant on each arc.
Alternatively one may assign boundary conditions on each arc, but,
due to the nonlinearity of equations, one has to make sure that boundary
values are attained. This amounts to look for solutions with waves having
negative speed on incoming channels and positive on outgoing ones:
in other words waves do not enter the junction.
A Riemann solver with such characteristics is called consistent, see also \cite{GP06}.

In this paper we are interested in transitions between different flow regimes, when the transition occurs at a junction of a canals network. We assume to have \textit{incoming canals} which end at the junction and \textit{outgoing canals} which start at the junction. 
Thus we formulate a left-half Riemann problem for incoming canals and a right-half Riemann problem for outgoing canals to define the region of admissible states such that waves do not propagate into the junction.
This corresponds to identify the regions where Riemann solvers can take values
in order to be consistent. 
Such regions are enclosed by the Lax curves (and inverted Lax curves)
and the regime change curves. To help the geometric intuition, we developed
pictures showing such curves and the regions they enclose.\\
Then we consider the specific case of two identical canals interconnected at a junction (\textit{simple junction}). We start focusing on conservation
of water through the junction and equal height as coupling conditions.
It is typically expected the downstream flow to be more regular,
thus we consider three cases:  fluvial to fluvial,  torrential to fluvial
and torrential to torrential.
In the fluvial to fluvial case there exists a unique
solution. However such solution may be different than the solution
to the same Riemann problem inside a canal (without the junction) and may exhibit the appearance of a torrential regime.
The torrential to fluvial case is more delicate to examine. Three different
cases may happen: the solution propagates the fluvial regime upstream,
the solution propagates the torrential regime downstream
or no solution exits.
Finally, in the torrential to torrential case, if the solution exists then
it is torrential.\\
To illustrate the achieved results we perform simulations using a Runge-Kutta Discontinuous Galerkin scheme \cite{BPQ2016}. 
The RKDG method is an efficient, effective and compact numerical approach for simulations of water flow in open canals.
Specifically, it is a high-order scheme and compact in the sense that the solution on one computational cell depends only on direct neighboring cells via numerical fluxes, thus allowing for easy handling the numerical boundary condition at junctions. 
In the first example we show a simulation where an upstream
torrential regime is formed starting from special fluvial to fluvial conditions.
The second example shows how a torrential regime may propagate downstream.\\
We conclude by discussing the possible solutions if the water height condition
is replaced by the equal energy condition.



The paper is organized as follows: in Section \ref{sec:model}, we present the model starting from the one-dimensional shallow water equations. In Section \ref{sec:notations}, we give useful notations and preliminary results that allow to  determine the admissible states for the half-Riemann problems discussed in the following Section \ref{sec:half-Riemann}. In Section \ref{sec:junction} we study possible solutions at a simple junction for different flow regimes and different junction conditions. Finally, in Section \ref{sec:num_tests} we illustrate the results of the previous section with a couple of numerical tests.

\section{Flow classification and governing equations}\label{sec:model}
The most common and interesting method of classifying open-channel flows is by dimensionless \textit{Froude number}, which for a rectangular or very wide channel is given by the formula:
\begin{equation}\label{eq:FN}
Fr = \frac{|v|}{\sqrt{gh}},
\end{equation}
where $v$ is the average velocity and $h$ is the water depth. The three flow regimes are:
\begin{itemize}
\item $Fr < 1$ \textit{subcritica}l flow or fluvial regime;
\item $Fr=1$ \textit{critical} flow;
\item $Fr>1$ \textit{supercritical} flow or torrential regime.
\end{itemize}
The Froude-number denominator $(gh)^{1/2}$ is the speed of an infinitesimal shallow-water surface wave.  As in gas dynamics, a channel flow can accelerate from subcritical to critical to supercritical flow and then return to subcritical flow through a shock called a \textit{hydraulic jump}, see \cite{Dasgupta2011} and references there in.\\
We are interested in the transition between different flow regimes when it occurs at a junction of a canals network.
On each canal the dynamics of water flow is described by the following system of \textit{one-dimensional shallow water equations}
\begin{equation}\label{eq:shallow_water}
\left( \begin{array}{c} h \\ hv \end{array}\right)_t + \left( \begin{array}{c} hv \\  hv^2 + \frac 12 g h^2 \end{array}\right)_x = 0.
\end{equation}
The quantity $q=hv$ is often called \textit{discharge} in shallow water theory, since it measures the rate of water past a point.
We write the system as:
\begin{equation}\label{eq:shallow_water_homog}
\partial_t u + \partial_x f(u) = 0,
\end{equation}
where
\begin{equation}\label{eq:shallow_water_flux}
u=\left(\begin{array}{c}
h \\ hv
\end{array}\right), \quad 
f(u)= \left(\begin{array}{c}
hv \\ h v^2 + \frac 12 g h^2
\end{array}\right) .
\end{equation}
For smooth solutions, these equations can be rewritten in
quasi-linear form 
\begin{equation}
\partial_t u + f^\prime(u)\partial_x u = 0,
\end{equation}
where the Jacobian matrix $f^\prime(u)$ is given by
\begin{equation}
f^\prime(u) = \left(\begin{array}{cc} 
0 & 1 \\ -v^2+gh & 2v
\end{array}\right).
\end{equation}
The eigenvalues of $f^\prime(u)$ are 
\begin{equation}\label{eq:eigenvalues}
\lambda_1=v-\sqrt{gh}, \quad \lambda_2=v+\sqrt{gh},
\end{equation}
with corresponding eigenvectors
\begin{equation}\label{eq:egenvectors}
r_1=\left(\begin{array}{cc} 1 \\ \lambda_1\end{array}\right),
\quad
r_2=\left(\begin{array}{cc} 1 \\ \lambda_2\end{array}\right).
\end{equation}
The shallow water equations are \textit{strictly hyperbolic} away from $h=0$ and both $1$-th and $2$-th fields are \textit{genuinely nonlinear} ($\nabla\lambda_j(u)\cdot r_j(u)\ne 0$, $j=1,2$).
Note that $\lambda_1$ and $\lambda_2$ can be of either sign, depending on the magnitude of $v$ relative to $\sqrt{gh}$, so depending on the Froude number.\\
Solutions to systems of conservation laws are usually constructed
via Glimm scheme of wave-front tracking \cite{Daf16}. 
The latter is based on the
solution to \textit{Rieman problems}:
\begin{equation}\label{eq:shallow_water_Rpb}
\left\{\begin{array}{l}
\partial_t u + \partial_x f(u) = 0,\\
\smallskip
u(x,0) = \left\{
\begin{array}{ll}
u_l & \mbox{ if } x<0,\\ u_r & \mbox{ if } x>0.
\end{array}
\right.
\end{array}\right.
\end{equation}
Here $u(x,0)=(h(x,0),q(x,0))$ and $u_l=(h_l,q_l)$ and $u_r=(h_r,q_r)$. The solution always consists of two waves, each of which is a shock or rarefaction,
thus we first describe these waves.
\begin{itemize}
\item[\textbf{(R)}]{Centered Rarefaction Waves}. Assume $u^+$ lies on the positive $i$-rarefaction curve through $u^-$, then we get
$$
u(x,t)=\left\{\begin{array}{ll}
	u^- & \mbox{ for } x<\lambda_i(u^-)t,\\
	R_i(x/t;u^-) & \mbox{ for } \lambda_i(u^-)t\leq x \leq \lambda_i(u^+)t,\\
	u^+ & \mbox{ for } x>\lambda_i(u^+)t,
	\end{array}\right.
$$
where, for the $1$-family
$$
R_1(\xi;u^-) := \left(\begin{array}{c}
\frac{1}{9}(v^-+2\sqrt{h^-}-\xi)^2\\
\frac{1}{27}(v^-+2\sqrt{h^-}+2\xi)(v^-+2\sqrt{h^-}-\xi)^2
\end{array}\right)
$$
for $\xi\in[v^+-\sqrt{h^-},v^-+2\sqrt{h^-})$, and for the second family
$$
R_2(\xi;u^-) := \left(\begin{array}{c}
\frac{1}{9}(-v^-+2\sqrt{h^-}-\xi)^2\\
\frac{1}{27}(v^--2\sqrt{h^-}+2\xi)(-v^-+2\sqrt{h^-}-\xi)^2
\end{array}\right)
$$ 
for $\xi\in[\lambda_2(u^-),\infty)$.

\item[\textbf{(S)}]{Shocks}. Assume that the state $u^+$ is connected to the right of $u^-$ by an $i$-shock, then calling $\lambda = \lambda_i(u^+,u^-)$ the Rankine-Hugoniot speed of the shock, the function
$$
u(x,t)=\left\{\begin{array}{ll}
	u^- & \mbox{ if } x<\lambda t\\
	u^+ & \mbox{ if } x>\lambda t
	\end{array}\right.
$$
provides a piecewise constant solution to the Riemann problem. For strictly hyperbolic systems, where the eigenvalues are distinct, we have that
$$
\lambda_i(u^+) < \lambda_i(u^-,u^+)<\lambda_i(u^-), \quad \lambda_i(u^-,u^+) = \frac{q^+-q^-}{h^+-h^-}.
$$
\end{itemize}

\section{The geometry of Lax and regime change curves}\label{sec:notations}
To determine a solution for problems on a network, we need to analyze
in detail the shape of shocks and rarefaction curves and, more generally,
of Lax curves (which are formed by joining shocks and rarefaction ones,
see \cite{Bressan}). We start fixing notations and illustrating the
shapes of curves.\\
For a given point $(h_0,v_0)$, we use the following notations for shocks and rarefaction curves:
\begin{equation}\label{SRdef}
\begin{array}{ll}
\mbox{for } h<h_0, & v = \Rcal_1(h_0,v_0;h) = v_0-2(\sqrt{gh}-\sqrt{gh_0});
\medskip\\
\mbox{for } h>h_0, &v = \Scal_1(h_0,v_0;h) = v_0-(h-h_0)\sqrt{g\frac{h+h_0}{2hh_0}};
\medskip\\
\mbox{for } h>h_0, &v = \Rcal_2(h_0,v_0;h) = v_0-2(\sqrt{gh_0}-\sqrt{gh});
\medskip\\
\mbox{for } h<h_0, & v = \Scal_2(h_0,v_0;h) = v_0-(h_0-h)\sqrt{g\frac{h+h_0}{2hh_0}}.
\end{array}
\end{equation}
Moreover, we define the inverse curves:
\begin{equation}\label{SRdef_m1}
\begin{array}{ll}
\mbox{for } h>h_0, & v = \Rcal^{-1}_1(h_0,v_0;h) = v_0+2(\sqrt{gh_0}-\sqrt{gh});
\medskip\\
\mbox{for } h<h_0, & v = \Scal^{-1}_1(h_0,v_0;h) = v_0+(h_0-h)\sqrt{g\frac{h+h_0}{2hh_0}}.
\end{array}
\end{equation}
Similarly, we set:
\begin{equation}\label{SRdef_m2}
\begin{array}{ll}
\mbox{for } h<h_0, & v = \Rcal^{-1}_2(h_0,v_0;h) = v_0+2(\sqrt{gh}-\sqrt{gh_0});
\medskip\\
\mbox{for } h>h_0, & v = \Scal^{-1}_2(h_0,v_0;h) = v_0+(h-h_0)\sqrt{g\frac{h+h_0}{2hh_0}}.
\end{array}
\end{equation}
We will also consider the regime transition curves:
the $1$-critical curve is given by
\begin{align}\label{eq:Cp}
v = \Ccal^+(h) = \sqrt{gh}
\end{align} 
and the $2$-critical curve by
\begin{align}\label{eq:Cm}
v = \Ccal^-(h) = -\sqrt{gh}.
\end{align}
In Figure \ref{fig:curve} we illustrate the shape of these curves.
\begin{figure}
\begin{tabular}{c}
\begin{overpic}
[width=0.9\textwidth]{./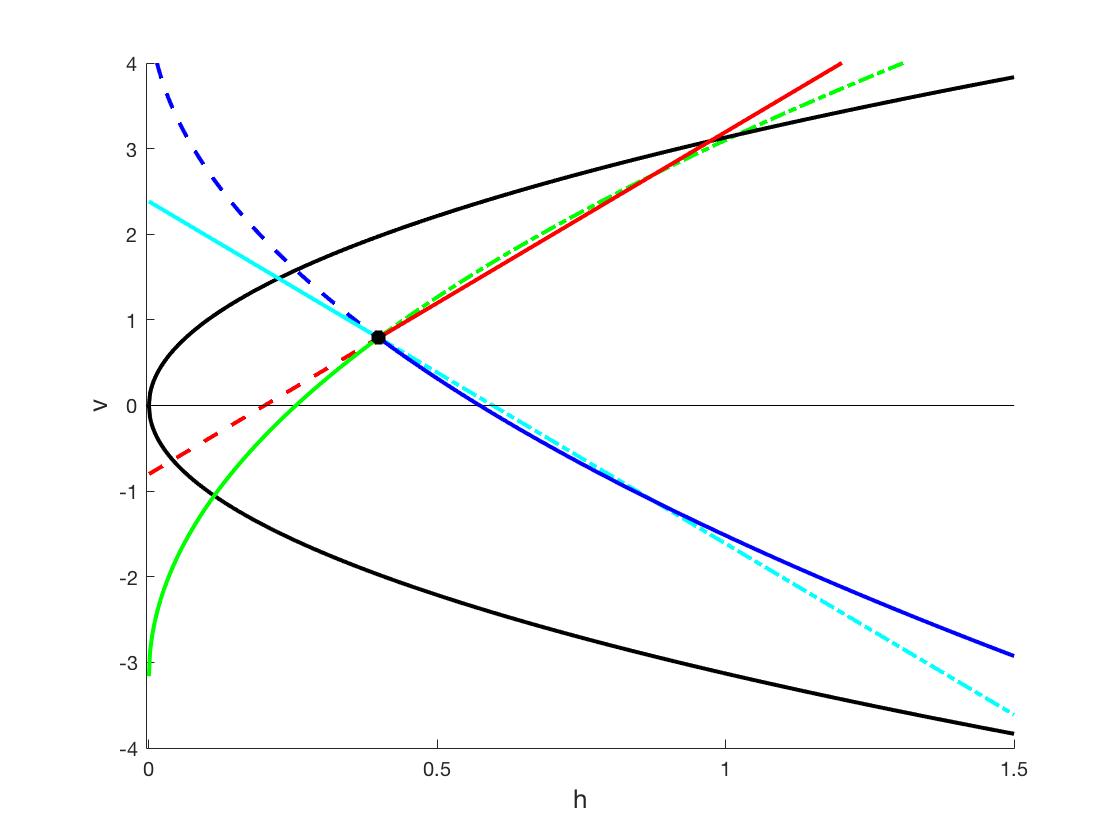}
\put(37,44){$(h_0,v_0)$}
\put(85,64){$C^+$}\put(65,11){$C^-$}
\put(15,51){$\Rcal_1$}\put(64,66){$\Rcal_2$}
\put(65,21){$\Rcal^{-1}_1$}\put(20,40){$\Rcal^{-1}_2$}
\put(85,19){$\Scal_1$}\put(20,60){$\Scal_1^{-1}$}
\put(22,32){$\Scal_2$}\put(77,70){$\Scal_2^{-1}$}
\end{overpic} 
\medskip\\
\begin{overpic}
[width=0.9\textwidth]{./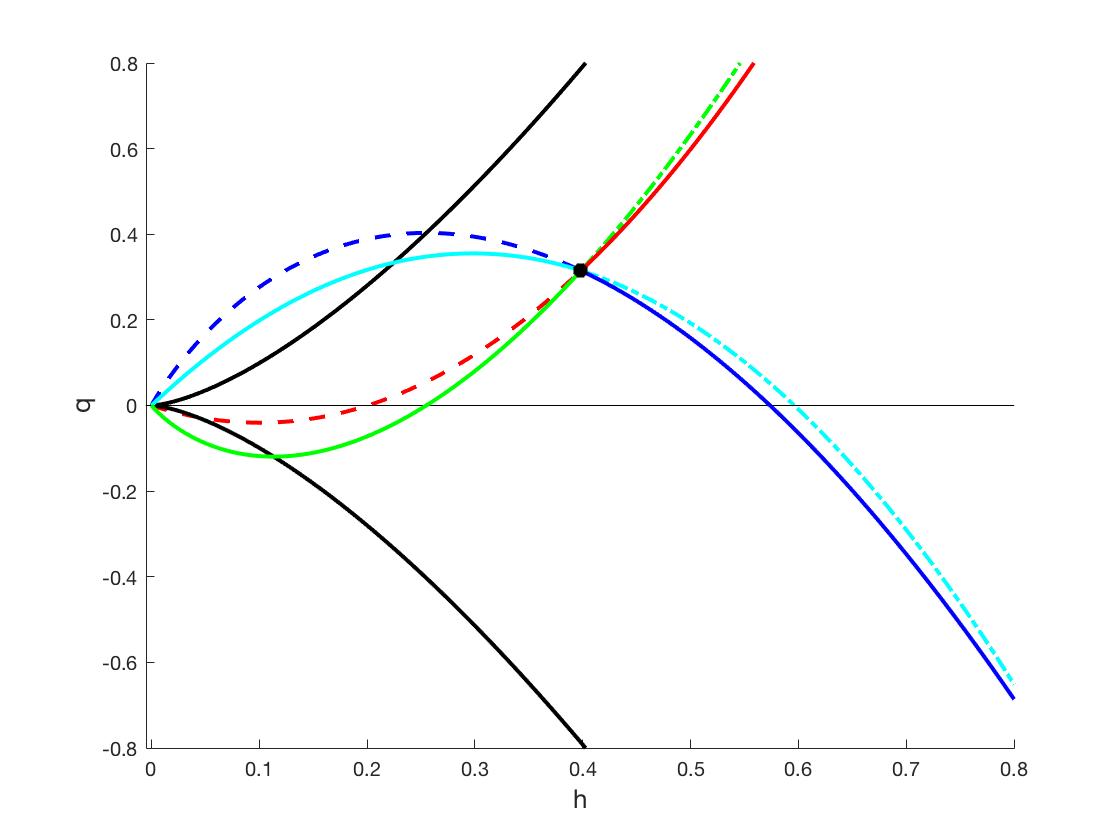}
\put(54,51){$(h_0,q_0)$}
\put(45,65){$\Ct^+$}\put(45,10){$\Ct^-$}
\put(40,49){$\tilde\Rcal_1$}\put(60,56){$\tilde\Rcal_2$}
\put(30,40){$\tilde\Rcal^{-1}_2$}\put(76,25){$\tilde\Scal_1$}
\put(85,23){$\tilde\Rcal_1^{-1}$}\put(20,50){$\tilde\Scal_1^{-1}$}
\put(44,40){$\tilde\Scal_2$}\put(59,65){$\tilde\Scal_2^{-1}$}
\end{overpic}
\end{tabular}
\caption{Shocks, rarefaction and critical curves \eqref{SRdef}-\eqref{eq:Cm} 
on the plane $(h,v)$ (up) and on the plane $(h,q)$ (down).}
\label{fig:curve}
\end{figure}
To construct a solution to a Riemann problem $(u_l,u_r)$, we define the Lax curves.
Given a left state $u_l=(h_l,q_l)$ the Lax curve is given by:
\begin{equation}\label{eq:phi_l}
\phi_l(h):=\Rcal_1(h_l,v_l;h)\cup\Scal_1(h_l,v_l;h).
\end{equation}
For the right state $u_r=(h_r,q_r)$,
we define the inverse Lax curve:
\begin{equation}\label{eq:phi_r}
\phi_r(h):=\Rcal_2^{-1}(h_r,v_r;h)\cup\Scal^{-1}_2(h_r,v_r;h).
\end{equation}

\begin{remark}\label{remark:std_sol}
The Riemann problem for shallow water equations \eqref{eq:shallow_water} with left state $u_l$ and right state $u_r$ has a unique solution if and only if 
the Lax curves $\phi_l(h)$ and $\phi_r(h)$ have a unique intersection. In that case, the intersection will be called the \textit{middle state} $u_m$.
As shown in Figure \ref{fig:phi}, the function $v=\phi_l(h)$ is strictly decreasing, unbounded and starting at the point $v_l+2\sqrt{gh_l}$ and, $v=\phi_r(h)$ is strictly increasing, unbounded, with minimum $v_r-2\sqrt{gh_r}$. Thus, the Riemann problem for shallow water has a unique solution in the region where
\begin{equation}\label{eq:region_cond}
v_l+2\sqrt{gh_l}\geq v_r-2\sqrt{gh_r}.
\end{equation}
\end{remark}
\begin{figure}
%
\begin{overpic}
[width=0.9\textwidth]{./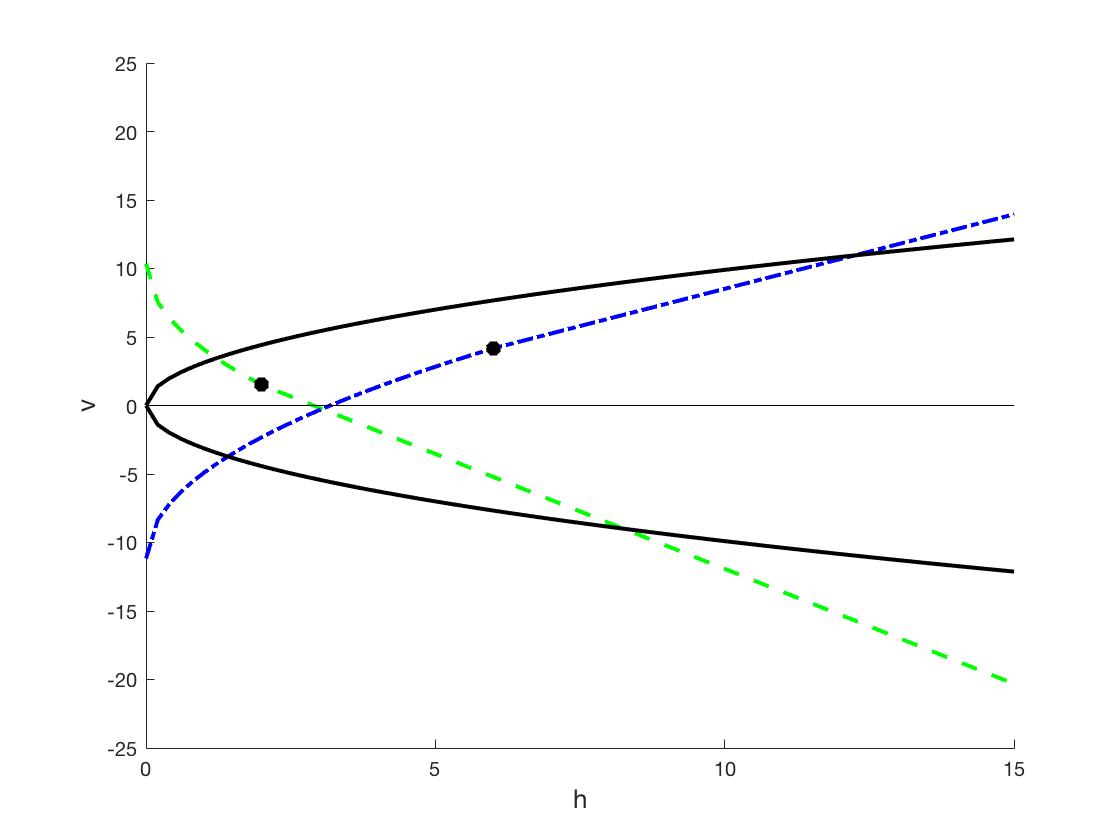}
\put(23,41){$(h_l,v_l)$}\put(40,40){$(h_r,v_r)$}
\put(85,50){$C^+$}\put(85,25){$C^-$}
\put(15,27){$\Rcal^{-1}_2$}\put(60,45){$\Scal^{-1}_2$}
\put(15,47){$\Rcal_1$}\put(50,31){$\Scal_1$}
\put(75,16){$\phi_l$}
\put(80,55){$\phi_r$}
\end{overpic} 

\caption{Graph of $\phi_l$ and $\phi_r$ defined in \eqref{eq:phi_l} and \eqref{eq:phi_r} respectively.}
\label{fig:phi}
\end{figure}
When working with $(h,q)$ variables we use the following notations
for Lax curves and regime transition curves:
\begin{align*}
\phit_l(h)=h\phi_l(h),\quad\phit_r(h)=h\phi_r(h)\quad\mbox{and}\quad\Ct^+(h)= h\Ccal^+(h),\quad\Ct^-(h)=h\Ccal^-(h).
\end{align*}
Moreover, for a given value $(h_i,v_i)$ (or $(h_i,q_i)$) we set
\begin{equation}\label{eq:froude}
\Fr_i= \frac{v_i}{\sqrt{gh_i}}, \quad\mbox{or}\quad \tilde \Fr_i = \frac{q_i}{h_i\sqrt{gh_i}}.
\end{equation}

\begin{figure}

\begin{overpic}
[width=0.9\textwidth]{./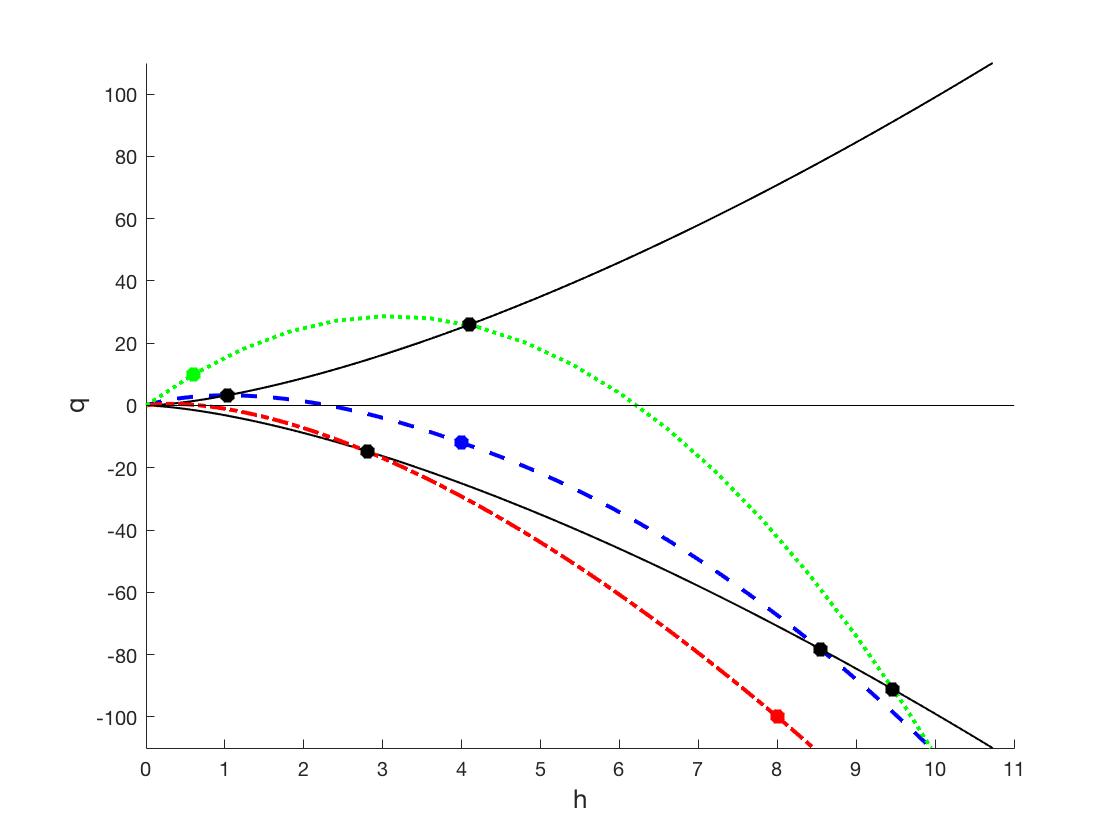}
\put(40,37){$u_l$}
\put(73,18){$u^-_{l,\Scal}$}
\put(18,46){$u^+_{l,\Rcal}$}
\put(20,42){$\huge\downarrow$}
\put(70,12){$u_l$}
\put(31,31){$u^-_{l,\Rcal}$}
\put(16,43){$u_l$}
\put(41,48){$u^+_{l,\Scal}$}
\put(80,14){$u^-_{l,\Scal}$}
\put(80,66){$\Ct^+$}
\put(86,10){$\Ct^-$}
\end{overpic}
%
\caption{Graph of $q=\phit_l(h)$ for different values of left state $u_l$ and its intersections with critical curves $q=\Ct^+(h)$ and $q=\Ct^-(h)$. The left state $u_l$ have been chosen such that: $F_l>1$ (dotted green line ), $|F_l|<1$ (blue dashed line) and $-2\leq F_l<-1$ (red dotted line).}
\label{fig:punti}
\end{figure}

\subsubsection{The Lax curve $\phit_l(h)$}\label{sec:phil} In this subsection we study in detail the properties of the function $q=\phit_l(h)$. For a given left state $u_l=(h_l,q_l)$,
$$
\phit_l(h) = \left\{\begin{array}{ll}
h\left(v_l+2\sqrt{gh_l}-2\sqrt{gh}\right), & 0<h\leq h_l,
\\
h\left(v_l-\sqrt{\frac{g}{2h_l}}(h-h_l)\sqrt{\frac{h+h_l}{h}}\right), & h>h_l
\end{array}\right.
$$
with 
$$
\lim_{h\rightarrow 0^+}\phit_l(h) = 0 \quad\mbox{and}\quad \lim_{h\rightarrow +\infty}\phit_l(h) = -\infty.
$$
By computing its first and second derivatives,
$$
\phit_l^\prime(h) = \left\{\begin{array}{ll}
v_l+2\sqrt{gh_l}-3\sqrt{gh}, & 0<h\leq h_l,
\\
v_l-\sqrt{\frac{g}{2h_l}}\left(\frac{4h^2+h_lh-h^2_l}{2\sqrt{h(h+h_l)}}\right), & h>h_l
\end{array}\right.
$$
and
$$
\phit_l^{\prime\prime}(h) = \left\{\begin{array}{ll}
-\frac{3}{2}\sqrt{\frac{g}{h}}, & 0<h\leq h_l,
\\
-\sqrt{\frac{g}{2h_l}}\left(\frac{8h^3+12h_lh+3h_l^2h+h_l^3}{4h(h+h_l)\sqrt{h(h+h_l)}}\right), & h>h_l,
\end{array}\right.
$$
we can conclude that $\phit_l\in C^2(]0,+\infty[)$. Specifically, in $]0,+\infty[$ $\phit_l^{\prime\prime}<0$, $\phit_l^\prime(h)$ is strictly decreasing and $\phit_l$ is a strictly concave function. As
$$ \phit_l^\prime(0)=v_l+2\sqrt{gh_l} \quad \mbox{ and } \quad \lim_{h\rightarrow+\infty}\phit_l(h) = -\infty$$
we investigate two different cases:\\
{\bf Case 1.} If $v_l\leq -2\sqrt{g h_l}$, $\phit_l$ is a strictly negative decreasing concave function and specifically
$\phit_l(h) < \Ct^-(h)$ for  $h>0$.
Therefore, in this case $\phit_l$ never intersects the critical curves $\Ct^+$ and $\Ct^-$.\\
{\bf Case 2.}
If $v_l>-2\sqrt{g h_l}$, the function $\phit_l$ admits a maximum point $h_l^{max}$, so it is increasing in $(0,h_l^{max})$ an decreasing in $(h^{max}_l,+\infty)$. In this case the function $\phit_l$ intersects the two critical curves $\Ct^+$ and $\Ct^-$. The intersection points vary with the choice of the left state $u_l$, see Figure \ref{fig:punti}. To compute this points, we distinguish the following subcases: $u_l$ is such that $-2\leq\Fr_l\leq 1$ or such that $\Fr_l>1$.\\
{\bf Case 2.1.}
For $-2<\Fr_l \leq 1$, it is the rarefaction portion of $\phit_l$ to intersect the 1-critical curve $\Ct^+$ at $u^+_{l,\Rcal} = (h^+_{l,\Rcal}, \Ct^+(h^+_{l,\Rcal}))$ with
\begin{equation}\label{uplr}
h^+_{l,\Rcal} = \frac{1}{9g}\left(v_l+2\sqrt{gh_l}\right)^2,
\end{equation}
and we have that the maximum point $h_l^{max}$ is such that $\phit_l(h_l^{max})\equiv \Ct^+(h_l^{max})$, i.e. $h_l^{max}\equiv h^+_{l,\Rcal}$.
In the special case $\Fr_l=1$ we get $h_l^{max}\equiv h_l$.
Moreover,
$$
	\phit_l(h)=0 \Leftrightarrow h =\frac{1}{4g}(v_l+2\sqrt{gh_l})^2. 
$$ 
Notice that for $-2<\Fr_l\leq -1$
the rarefaction portion of $\phit_l$ intersects $\Ct^-$ at 
$u^-_{l,\Rcal} = (h^-_{l,\Rcal}, \Ct^-(h^-_{l,\Rcal}))$ with
\begin{equation}\label{umlr}
h^-_{l,\Rcal} = \frac{1}{g}\left(v_l+2\sqrt{gh_l}\right)^2;
\end{equation}
while for $-1<\Fr_l \leq 1$, the shock portion of $\phit_l$ intersects  $\Ct^-$ at $u^-_{l,\Scal} = (h^-_{l,\Scal}, \Ct^-(h^-_{l,\Scal}))$ with
$h^-_{l,\Scal}$ given by the following condition:
\begin{equation}\label{umls}
v_l-(h^-_{l,\Scal}-h_l)\sqrt{g\frac{h^-_{l,\Scal}+h_l}{2h_lh^-_{l,\Scal}}}+\sqrt{gh^-_{l,\Scal}} = 0;
\end{equation}
{\bf Case 2.2.}
For $\Fr_l >1$, the maximum value of $\phit_l$ is reached by the shock portion and so $h_l^{max}>h_l$. The shock portion intercepts the critical curve $\Ct^+$ at $u^+_{l,\Scal} = (h^+_{l,\Scal}, \Ct^+(h^+_{l,\Scal}))$ with
\begin{equation}\label{upls}
h^+_{l,\Scal} \quad \mbox{such that} \quad
v_l-(h^+_{l,\Scal}-h_l)\sqrt{g\frac{h^+_{l,\Scal}+h_l}{2h_lh^+_{l,\Scal}}}-\sqrt{gh^+_{l,\Scal}} = 0;
\end{equation}
and the curve $\Ct^-$ at $u^-_{l,\Scal}$ defined in \eqref{umls}. 

Notice that for $h\geq h_l$ the equation 
$$
q_l=\phit_l(h) = h \Scal_1(h_l,v_l;h)
$$
has two solutions: 
\begin{equation}\label{eq:hstarl}
h=h_l \quad\mbox{and}\quad h=h^*_l=\frac{h_l}{2} \left(-1+\sqrt{1+8\Fr_l^2}\right). 
\end{equation}
Moreover, for $\Fr_l>1$, we have $h_l^*>h^+_l$, where the height $h^+_l=(q_l^2/g)^\frac 13$ is given by the intersection between $\Ct^+$ and the horizontal line $q=q_l$. That is: 
$$
(\frac{q_l^2}{g})^{\frac 13} < \frac{h_l}{2}(-1+\sqrt{1+8\Fr_l^2}), \quad \mbox{for}\quad \Fr_l>1.
$$
Indeed, using the relation  $q_l^2 = gh_l^3\Fr_l^2$,
if $\Fr_l>1$ then 
$
2\Fr_l^{\frac{4}{3}} - \Fr_l^{\frac{2}{3}}-1 > 0.
$
So, for $(h_l,q_l)$ supercritical, the point $(h^*_l,q_l)$ is subcritical, i.e. $|\Fr^*_l|<1$.

\subsubsection{The Lax curve $\phit_r(h)$}\label{sec:phir}
Here we study the properties of the function $\phit_r(h)$, given by:
$$
\phit_r(h) = \left\{\begin{array}{ll}
h\left(v_r-2\sqrt{gh_r}+2\sqrt{gh}\right), & 0<h\leq h_r,
\\
h\left(v_r+\sqrt{\frac{g}{2h_r}}(h-h_r)\sqrt{\frac{h+h_r}{h}}\right), & h>h_r.
\end{array}\right.
$$
By straightforward computations we get its derivatives:
$$
\phit_r^\prime(h) = \left\{\begin{array}{ll}
v_r-2\sqrt{gh_r}+3\sqrt{gh}, & 0<h\leq h_r,
\\
v_r+\sqrt{\frac{g}{2h_r}}\left(\frac{4h^2+h_rh-h^2_r}{2\sqrt{h(h+h_r)}}\right), & h>h_r.
\end{array}\right.
$$
$$
\phit_r^{\prime\prime}(h) = \left\{\begin{array}{ll}
\frac{3}{2}\sqrt{\frac{g}{h}}, & 0<h\leq h_r,
\\
\sqrt{\frac{g}{2h_r}}\left(\frac{8h^3+12h_rh+3h_r^2h+h_r^3}{4h(h+h_r)\sqrt{h(h+h_r)}}\right), & h>h_r.
\end{array}\right.
$$
Then, $\phit_r(h)\in C^2(]0,+\infty[)$ and 
in $]0,+\infty[$ $\phit_l^{\prime\prime}>0$, $\phit_l^\prime(h)$ is strictly increasing and $\phit_l$ is a strictly convex function. As
$$ \phit_r^\prime(0)=v_r-2\sqrt{gh_r} \quad \mbox{ and } \quad \lim_{h\rightarrow+\infty}\phit_r(h) = +\infty$$
we investigate two different cases:\\
{\bf Case 1.} If $v_r\geq 2\sqrt{gh_r}$, $\phit_r$ is a strictly positive increasing convex function and specifically
$\phit_r(h) > \Ct^+(h)$, for $h>0$.
Therefore, $\phit_r$ never intersects the critical curves $\Ct^+$ and $\Ct^-$.\\
{\bf Case 2.} If $v_r< 2\sqrt{g h_l}$, the function $\phit_r$ admits a minimum point $h_r^{min}$. In this case the function $\phit_r$ intersects the two critical curves $\Ct^+$ and $\Ct^-$.  As done before we distinguish two subcases:\\
{\bf Case 2.1.} For $-1<\Fr_r \leq 2$, the rarefaction portion of $\phit_r$  intersects the 2-critical curve $\Ct^-$ at $u^-_{r,\Rcal} = (h^-_{r,\Rcal}, \Ct^-(h^-_{r,\Rcal}))$, with
\begin{equation}\label{umrr}
\begin{array}{l}
h^-_{r,\Rcal} = \frac{1}{9g}\left(-v_r+2\sqrt{gh_r}\right)^2
\end{array}
\end{equation}
and we have that the minimum point $h_r^{min}$ is such that $\phit_r(h_r^{min})\equiv \Ct^-(h_r^{min})$, i.e. $h_r^{min}\equiv h^-_{r,\Rcal}$.
In the special case $\Fr_r=1$ we get $h_r^{min}\equiv h_r$.
Moreover,
$$
	\phit_r(h)=0 \Leftrightarrow h =\frac{1}{4g}(-v_r+2\sqrt{gh_r})^2. 
$$ 
For $1<\Fr_r\leq 2$ the rarefaction portion of $\phit_r$ intersects $\Ct^+$ at $u^+_{r,\Rcal} = (h^+_{r,\Rcal}, \Ct^+(h^+_{r,\Rcal}))$, with
\begin{equation}\label{uprr}
\begin{array}{l}
h^+_{r,\Rcal} = \frac{1}{g}\left(-v_r+2\sqrt{gh_r}\right)^2,
\end{array}
\end{equation}
while for $-1<\Fr_r \leq 1$, the shock portion of $\phit_r$ intersects  $\Ct^+$ at $u^+_{r,\Scal} = (h^+_{r,\Scal}, \Ct^+(h^+_{r,\Scal}))$, with
$h^+_{r,\Scal}$ such that
\begin{equation}\label{uprs}
v_r+(h^+_{r,\Scal}-h_r)\sqrt{g\frac{h^+_{r,\Scal}+h_r}{2h_rh^+_{r,\Scal}}}+\sqrt{gh^+_{r,\Scal}} = 0.
\end{equation}
{\bf Case 2.2.} 
For $\Fr_r <-1$, the minimum value of $\phit_r$ is reached by the shock portion and so $h_r^{min}>h_r$. The shock portion intercepts the critical curves $\Ct^-$ at $u^-_{r,\Scal}=(h^-_{r,\Scal},\Ct^-(h^-_{r,\Scal}))$, with
$$
h^-_{r,\Scal} \mbox{ such that } v_r+(h_r-h^+_{r,\Scal})\sqrt{g\frac{h^-_{r,\Scal}+h_r}{2h_rh^-_{r,\Scal}}}+\sqrt{gh^-_{r,\Scal}} = 0
$$
and curve $\Ct^+$ at $u^+_{r,\Scal}$ given by \eqref{uprs}.

Notice that the equation 
$$
q_r=\phit_r(h)=h\Scal_2^{-1}(h_r,v_r;h), \quad h\geq h_r,
$$
has two solutions: 
\begin{equation}\label{eq:hstarr}
h=h_r \quad\mbox{and}\quad h=h^*_r=\frac{h_r}{2} \left(-1+ \sqrt{1+8\Fr_r^2}\right). 
\end{equation}
Moreover, for $\Fr_r<-1$ we have $h_r^*>h^-_r=(q_r^2/g)^\frac 13$, where the height $h_r^-$ is given by the intersection between $\Ct^-$ and the horizontal line $q=q_r$. So, for $(h_r,q_r)$ supercritical, the point $(h^*_r,q_r)$ is subcritical.

\section{The half Riemann problems}\label{sec:half-Riemann}

\subsection{Left-half Riemann problem}[The case of an incoming canal]\label{sec:leftRP}
We fix a left state and we look for the right states attainable by waves of 
non-positive speed.\\
Fix $u_l=(h_l,q_l)$, we look for the set $\Ncal(u_l)$ of points $\hat u=(\hat h,\hat q)$ such that the solution to the Riemann problem
\begin{equation}\label{Rleft}
\left\{
\begin{array}{l}
\partial_t u + \partial_x f(u) = 0,\\
u(x,0)=\left\{\begin{array}{ll}
	u_l & \mbox{ if } x<0\\
	\hat u & \mbox{ if } x>0
	\end{array}\right.
\end{array}
\right.
\end{equation}
contains only waves with non-positive speed.
We distinguish three cases: 
\begin{itemize}
\item \textit{Case A}: the left state $u_l$ is such that $|\tilde\Fr_l| < 1$;
\item \textit{Case B}: the left state $u_l$ is such that $\tilde\Fr_l > 1$;
\item \textit{Case C}: the left state $u_l$ is such that $\tilde\Fr_l <- 1$.
\end{itemize}
\subsubsection{Case A} For this case we refer to Figure \ref{fig:in_A}.
\begin{figure}
\begin{center}
\begin{overpic}
[width=0.9\textwidth]{./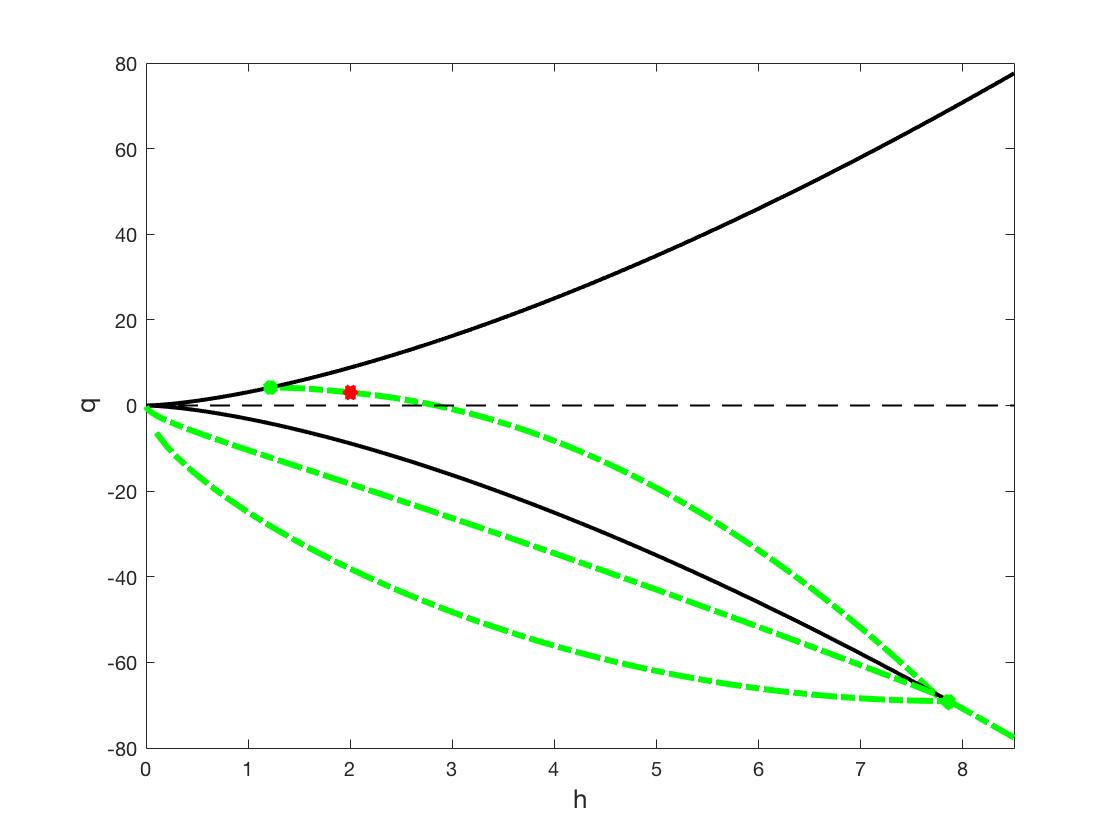}
\put(22,43){$u^+_{l,\Rcal}$}\put(84,13){$u^-_{l,\Scal}$}
\put(33,41){$u_l$}\put(25,13){$\Ical_2^A$}\put(45,22){$\Ical_3^A$}\put(66,34){$\Ical_1^A$}\put(62,31){$\huge\swarrow$}
\put(85,8){$\Ct^-$}
\put(40,17){\makebox(0,0){\rotatebox{-18}{$q=\tilde\Scal_2(u^-_{l,\Scal};h)$}}}
\end{overpic}
\end{center}
\caption{Left-half Riemann problem, Section \ref{sec:leftRP}. Region $\Ncal^A(u_l)=\Ical^A_1\bigcup\Ical^A_2\bigcup\Ical^A_3$ defined by \eqref{eq:IA1}-\eqref{eq:IA3}. Following our notation $\tilde\Scal_2(u^-_{l,\Scal};h)=h\Scal_2(h^-_{l,\Scal},\Ccal^-(h^-_{l,\Scal});h)$. }\label{fig:in_A}
\end{figure}
We identify the set $\Ncal^A(u_l)$ as the union of three regions $\Ical^A_1$, $\Ical^A_2$ and $\Ical^A_3$ defined in the plane $(h,q)$. 
The first region is identified by all points that belong to the curve $\phit_l(h)$ such that $\Ct^-(h)\leq\phit_l(h)\leq \Ct^+(h)$, i.e. 
\begin{equation}\label{eq:IA1}
\Ical^A_1 = \left\{(\hat h,\hat q) : h^+_{l,\Rcal} \leq \hat h \leq h^-_{l,\Scal}, \hat q=\phit_l(\hat h)\right\},
\end{equation}
where the points $h^+_{l,\Rcal}$ and $h^-_{l,\Scal}$ are given in \eqref{uplr} and \eqref{umls} respectively.
The second region is defined as follows
\begin{equation}\label{eq:IA2}
\begin{array}{lcl}
\Ical^A_2 &=& \left\{(\hat h,\hat q) : 0<\hat h\leq h^-_{l,\Scal}, \ \hat q\leq \hat h\Scal_2(h^-_{l,\Scal}, \Ccal^-(h^-_{l,\Scal}); \hat h)\right\}
\\
&&\bigcup\left\{(\hat h,\hat q): \hat h>h^-_{l,\Scal}, \ \hat q \leq \Ct^-(\hat h)\right\}.
\end{array}
\end{equation}
The last region $\Ical^A_3$ is defined by the set of all possible right states $\hat u$ that can be connected by a $2$-shock with non-positive speed to an intermediate state $u_m$ lying on $\phit_l(h)$ curve such that 
$|\tilde\Fr_m|\leq 1$ and
$$
\lambda(u_m,\hat u) = \frac{q_m-\hat q}{h_m-\hat h}\leq 0.
$$
To define this region, we have to look for values $q=hS_2(h_m,v_m;h)$ as $h<h_m$ such that $q\geq q_m$. That is,
$$ q_m-q=(h_m-h)\left(v_m+\sqrt{\frac{g}{2h_m}}\sqrt{h(h+h_m)}\right)\leq 0, \quad h<h_m. $$
This inequality is verified for $-1\leq \Fr_m <0$ and for all $h\leq h^*_m$ with $h^*_m$ given by
$$
h^*_m = \frac{h_m}{2} \left(-1 +\sqrt{1+8\Fr_m^2}\right).
$$
We obtain (see Figure \ref{fig:in_A}),
\begin{equation}\label{eq:IA3}
\begin{array}{lcl}
\Ical^A_3 &= & \left\{(\hat h,\hat q) : \mbox{ for all } (h_m,q_m) \mbox{ which vary on } \phit_l  \mbox{ such that } -1\leq \tilde\Fr_m < 0, \right.
\\
\smallskip
&&\left. \ \  0<\hat h\leq h^*_m, \ \hat q=\hat h\Scal_2(h_m,v_m;\hat h)\right\}.
\end{array}
\end{equation}

\subsubsection{Case B} For this case we refer to Figure \ref{fig:in_B}. 
\begin{figure}
\begin{center}
\begin{overpic}
[width=0.9\textwidth]{./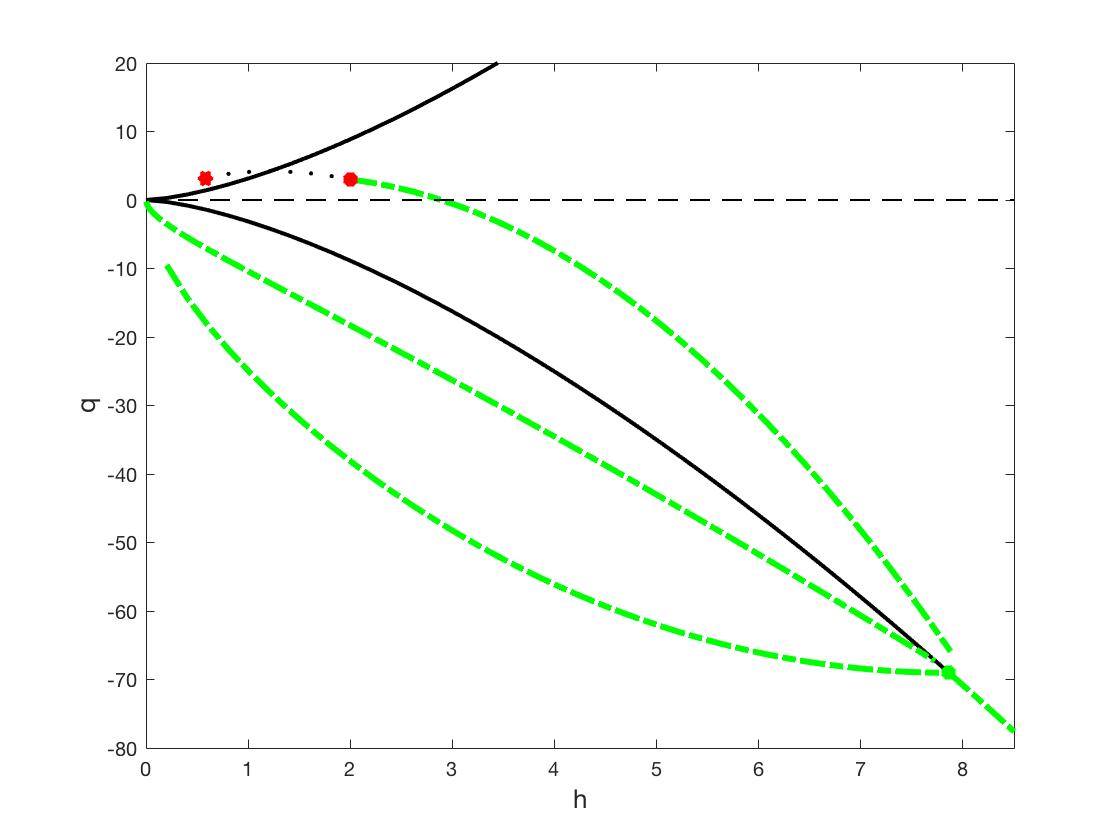}
\put(15,60){$u_{l}$}\put(32,60){$u_l^*$}
\put(86,15){$u^{-}_{l,\Scal}$}
\put(30,15){$\Ical_2^{*,A}$}\put(45,30){$\Ical_3^{*,A}$}\put(66,49){$\Ical_1^{*,A}$}\put(62,46){$\huge\swarrow$}
\put(84,10){$\Ct^-$}
\put(40,25){\makebox(0,0){\rotatebox{-27}{$q=\tilde\Scal_2(u^{*,-}_{l,\Scal};h)$}}}
\end{overpic}
\end{center}
\caption{Left-half Riemann problem, Section \ref{sec:leftRP}. Region $\Ncal^B(u_l)=\Ical^{*,A}_1\bigcup\Ical^{*,A}_2\bigcup\Ical^{*,A}_3$ given in \eqref{eq:NB}.}\label{fig:in_B}
\end{figure}
It is always possible to connect the left value 
$u_l$ to a value $u^*_l$ by a $1$-shock with \textit{zero} speed (vertical shock). Specifically, we set
$h^*_l>h_l$ such that $q^*_l=q_l$, i.e. 
$$
h^*_l =  \frac 12 \left(-1 +\sqrt{1+8\Fr_l^2}\right) h_l
$$
as previously computed in \eqref{eq:hstarl}. Moreover, as previously observed at the end of subsection \ref{sec:phil} the value $(h^*_l,q_l)$ falls in the subcritical region, then 
\begin{equation}\label{eq:NB}
\Ncal^B(u_l)=\Ncal^A(u_l)\setminus \left\{\hat u=(\hat h,\hat q): h^+_{l,\Scal}\leq \hat h\leq h_l^*,\ \hat q=\phit_l(\hat h)\right\},
\end{equation}
where $\Ncal^A(u_l)$ falls under the previous \textit{Case A}.

\subsubsection{Case C} For this case we refer to Figure \ref{fig:in_C}.
\begin{figure}
\begin{center}
\begin{overpic}
[width=0.9\textwidth]{./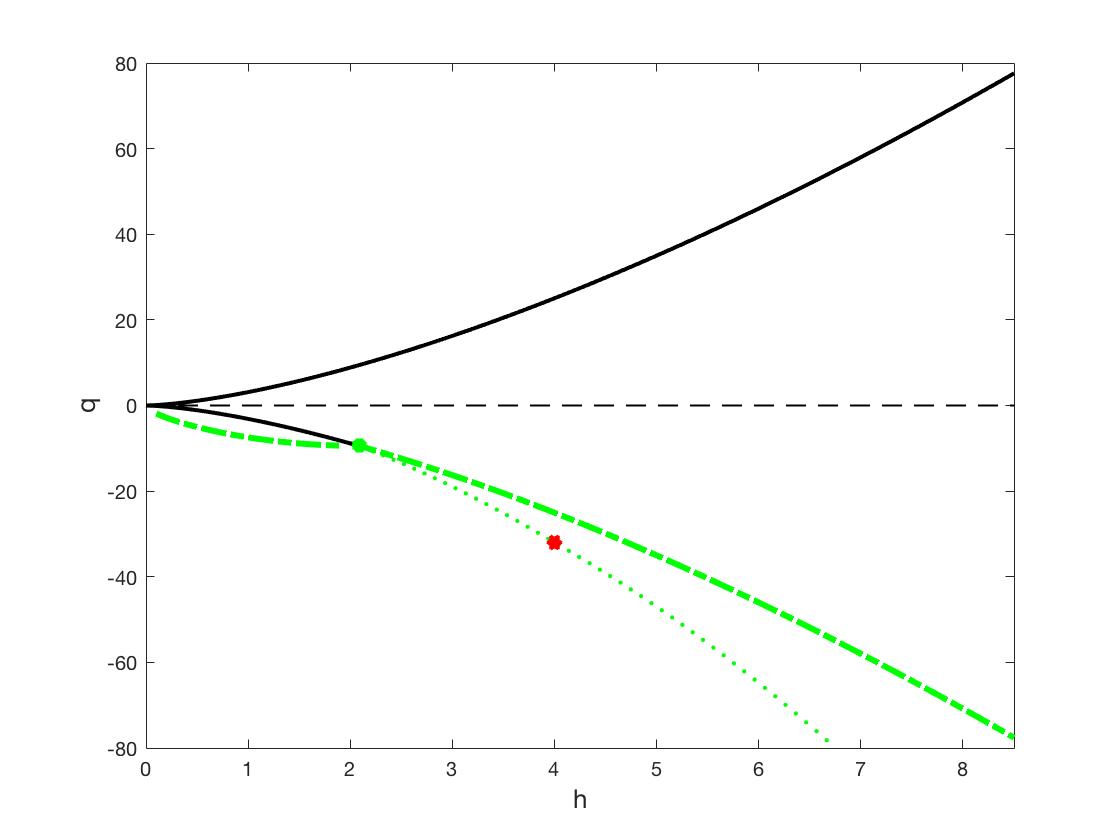}
\put(32,36){$u^-_{l,\Rcal}$}
\put(46,25){$u_l$}
\put(30,20){$\Ncal_C(u_l)$}
\put(80,10){$\Ct^-$}
\put(23,33){\makebox(0,0){\rotatebox{-8}{$q=\tilde\Scal_2(u^{-}_{l,\Rcal};h)$}}}
\end{overpic}
\end{center}
\caption{Left-half Riemann problem, Section \ref{sec:leftRP}. Region $\Ncal^C(u_l)$ bounded by $q=\tilde\Scal_2(u^{-}_{l,\Rcal};h)$ and $q=\Ct^-(h)$ as defined in \eqref{eq:NC}.}\label{fig:in_C}
\end{figure}
We have that: if $\Fr_l \leq -2$ then 
$$\Ncal^C(u_l)=\left\{(\hat h,\hat q): \hat h> 0,\  \hat q < \Ct^-(\hat h)\right\};$$ 
otherwise if $-2<\Fr_l<1$ the intersection point $u^-_{l,\Rcal}$ between $h\Rcal_1(h_l,v_l;h)$ and $\Ct_-(h)$ given in \eqref{umlr} is different from zero and defines the admissible region 
\begin{equation}\label{eq:NC}
\begin{array}{lcl}
\Ncal^C(u_l)&=&\left\{(\hat h,\hat q): 0< \hat h\leq h^-_{l,\Rcal},\ q< \hat h S_2(h^-_{l,\Rcal},v^-_{l,\Rcal};\hat h)\right\}
\\ \smallskip
&&\bigcup \left\{(\hat h,\hat q): \hat h>h^-_{l,\Rcal},\ \hat q< \Ct^-(\hat h)\right\}.
\end{array}
\end{equation}

\subsection{Right-half Riemann problem}[The case of an outgoing canal]\label{sec:rightRP}
We fix a right state and we look for the left states attainable by waves of non-negative speed. For sake of space the figures
illustrating these cases will be postponed to the Appendix.\\
Fix $u_r=(h_r,q_r)$, we look for the set $\Pcal(u_r)$ of points $\tilde u=(\tilde h,\tilde q)$ such that the solution to the Riemann problem
\begin{equation}\label{Rright}
\left\{
\begin{array}{l}
\partial_t u + \partial_x f(u) = 0,\\
u(x,0)=\left\{\begin{array}{ll}
	\tilde u & \mbox{ if } x<0\\
	u_r & \mbox{ if } x>0
	\end{array}\right.
\end{array}
\right.
\end{equation}
contains only waves with non-negative speed.
As in the previous case we identify three cases:
\begin{itemize}
\item \textit{Case A}: the right value $u_r$ is such that $|\tilde\Fr_r|<1$.

\item \textit{Case B}: the right value $u_r$ is such that $\tilde\Fr_r>1$

\item \textit{Case C}: the right value $u_r$ is such that $\tilde\Fr_r<-1$.
\end{itemize}
\subsubsection{Case A} For this case we refer to Figure \ref{fig:out_A} in the Appendix. We identify the set $\Pcal^A(u_r)$ as the union of three regions $\Ocal^A_1$, $\Ocal^A_2$ and $\Ocal^A_3$ defined in the plane $(h,q)$. The first region is defined by all points that belong to the curve $\phit_r(h)$ such that $\Ct^-(h)\leq\phit_r(h)\leq\Ct^+(h)$, i.e.
\begin{equation}\label{eq:OA1}
\Ocal^A_1=\left\{(\tilde h,\tilde q): h^-_{r,\Rcal}\leq \tilde h\leq h^+_{r,\Scal},\ \tilde q=\phit_r(\tilde h)\right\},
\end{equation}
where the points $h^-_{r,\Rcal}$ and $h^+_{r,\Scal}$ are given in \eqref{umrr} and \eqref{uprs} respectively.
\\
The second region is such that
\begin{equation}\label{eq:OA2}
\begin{array}{lcl}
\Ocal^A_2&=&\left\{(\tilde h,\tilde q): 0<\tilde h\leq h^+_{r,\Scal},\ \tilde q\geq \tilde h\Scal_1^{-1}(h^+_{r,\Scal},v^+_{r,\Scal};\tilde h)\right\}
\smallskip\\
&&\bigcup\left\{ \tilde h\geq h^+_{r,\Scal},\ \tilde q\geq \Ct^+(\tilde h)\right\}.
\end{array}
\end{equation}
The third region is defined by the set of all possible left states $\tilde u$ that can be connected by a 1-shock with non-negative speed to a middle state $u_m$ lying on $\phit_r(h)$ curve such that $|\tilde\Fr_m|\leq 1$ and
$$
\lambda(u_m,\tilde u) = \frac{q_m-\tilde q}{h_m-\tilde h}\geq 0.
$$
To define this region we have to look for values $q=h\Scal_1^{-1}(h_m,v_m;h)$ for $h<h_m$ such that $q_m\geq q$. Then, following the same reasoning  done for the left-half Riemann problem we get
\begin{equation}\label{eq:OA3}
\begin{array}{lcl}
\Ocal^A_3 &=& \left\{(\tilde h,\tilde q) : \mbox{ for all } (h_m,q_m) \mbox{ which vary on } \phit_r \mbox{ such that } 0<\tilde\Fr_m \leq 1\right.
\\
\smallskip 
&&\left. \ \ 0<\tilde h\leq h^*_m, \ \tilde q=\tilde h\Scal_1^{-1}(h_m,v_m;\tilde h)\right\}.
\end{array}
\end{equation}

\subsubsection{Case B} For this case we refer to Figure \ref{fig:out_B}. If $\Fr\geq 2$ then
$$
\Pcal^B(u_r)=\left\{(\tilde h,\tilde q):\ \tilde h\geq 0,\ \tilde q\geq \Ct^+(\tilde h)\right\};
$$
otherwise, if $1\leq \Fr <2$ the intersection point $u^+_{r,\Rcal}$ between $\Rcal_2^{-1}(u_r;h)$ and $\Ccal^+(h)$, given in \eqref{umrr} is different from zero and defines the admissible region
\begin{equation}\label{eq:PB}
\begin{array}{lcl}
\Pcal^B(u_r)&=&\left\{(\tilde h,\tilde q):\ 0< \tilde h \leq h^+_{r,\Rcal}, \ \tilde q\geq \tilde h\Scal_1^{-1}(u^-_{r,\Rcal};\tilde h)\right\}
\smallskip\\
&&\bigcup\left\{(\tilde h,\tilde q), \tilde h>h^-_{r,\Rcal},\ \tilde q\geq \Ct^+(\tilde h) \right\}.
\end{array}
\end{equation}

\subsubsection{Case C} For this case we refer to Figure \ref{fig:out_C}. 
It is always possible to connect the right value 
$u_r$ to a value $u^*_r$ by a $2$-shock with \textit{zero} speed (vertical shock). Specifically, we set
$h^*_r>h_r$ such that $q^*_r=q_r$, i.e. 
$$
h^*_r =  \frac 12 \left(-1 +\sqrt{1+8\Fr_r^2}\right) h_r
$$
as done in \eqref{eq:hstarr}. Moreover, as previously observed at the end of subsection \ref{sec:phir}, the point $(h^*_r,q_r)$ is subcritical and then 
\begin{equation}\label{eq:PC}
\Pcal^C(u_r)=\Pcal^A(u^*_r)\setminus \left\{(h,q): h^-_{r,\Scal}\leq h\leq h_r^*,\ q=\phit_r(h)\right\},
\end{equation}
 where $\Pcal^A(u^*_r)$ falls under the previous \textit{Case A}.

\section{A simple junction}\label{sec:junction}
We consider a network formed by  two canals intersecting at one single point, which represents the junction. We name the canals such that
 $1$ is the incoming canal and $2$ is the outgoing ones.
We indicate by $\su=(\sh,\sv\sh)=(\sh,\sq)$ the traces at the junction.
The flow is given by the one-dimensional shallow-water equations \eqref{eq:shallow_water} in each canal coupled with special conditions at the junction. Our aim is to define and solve Riemann problems at the junction.

A Riemann Problem at a junction is a Cauchy Problem with initial data which are constant on each canal incident at the junction.
So, assuming constant initial conditions $u^0_1$, $u^0_2$ on canal $1$ and $2$ respectively, the Riemann solution consists of intermediate states $\su_1$ and $\su_2 $  such that $\su_1 \in\Ncal(u^0_1)$ and $\su_2\in\Pcal(u^0_2)$ and verifying given coupling conditions.
Here, we are interested in evaluating possible solutions for different types of initial data belonging to different flow regimes. We start
assuming, as junction conditions, the conservation of mass
\begin{equation}\label{mass_cons}
\sq_1 = \sq_2
\end{equation}
and equal heights
\begin{equation}\label{equal_h}
\sh_1 = \sh_2.
\end{equation}
In the following we study the \textit{boundary solution} $\su$ at the junction in the following interesting cases: A$\rightarrow$A, the water flow is fluvial in both incoming and outgoing canals around the junction; B$\rightarrow$A, the water flow is torrential with positive velocity in the incoming canal while it is fluvial in the outgoing one; B$\rightarrow$B, the water flow is torrential with positive velocity in both incoming and outgoing canals. 

\subsection*{Case A$\rightarrow$A (Fluvial $\rightarrow$ Fluvial)}
Here we assume to have a left state $u_l$ and a right state $u_r$ such that $|\Fr_l|< 1$ and $|\Fr_r|< 1$.
The solution at the junction consists of two waves separated by intermediate states $\su_1$ and $\su_2$ such that 
\begin{equation}\label{1in1outAA:junc_cond_*}
\left\{\begin{array}{l}
\su_1\in\Ncal^A(u_l),
\smallskip\\
\su_2\in\Pcal^A(u_r),
\smallskip\\
\sq_1= \sq_2 = \sq,
\smallskip\\
\sh_1=\sh_2,
\end{array}\right.
\end{equation}
with $\sh_1,\sh_2>0$.
\begin{figure}
\begin{center}
\begin{overpic}
[width=0.9\textwidth]{./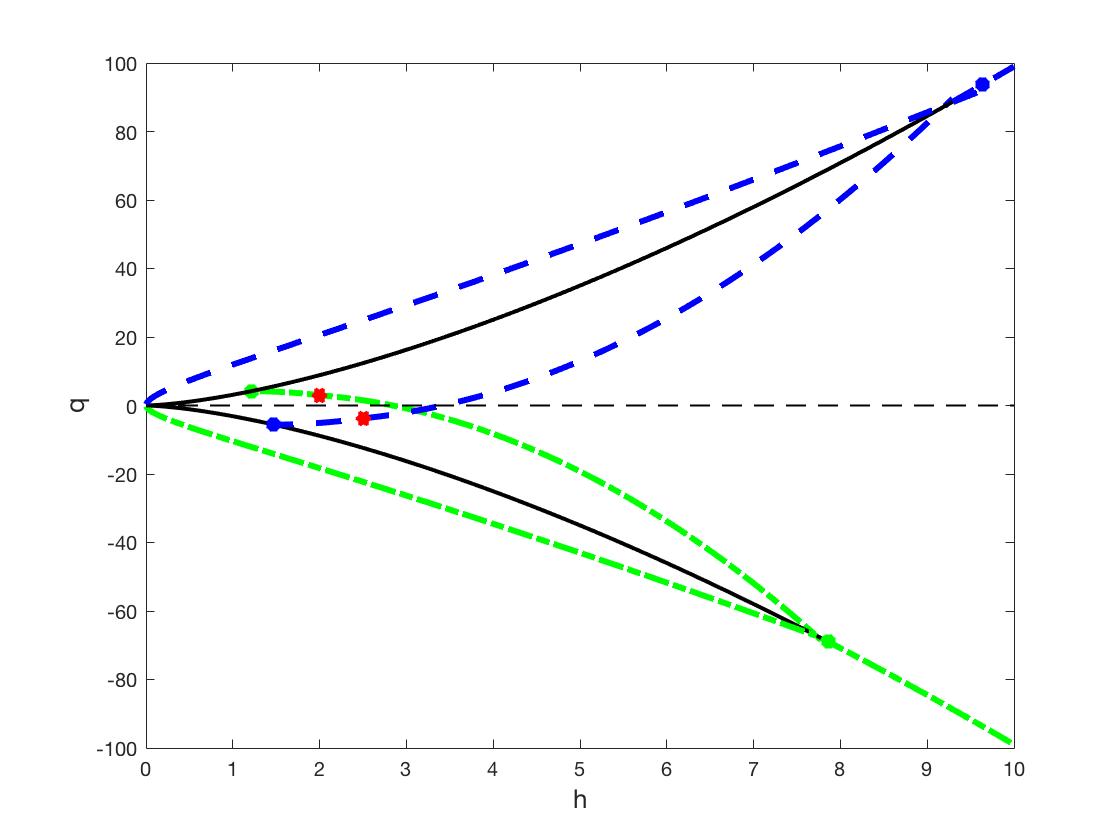}
\put(33,35){$u_r$}
\put(30,40){$u_l$}
\put(41,42){$\su$}
\put(37,40){$\bf\huge\swarrow$}
\put(20,18){$\Ncal^B(u_l)$}
\put(20,60){$\Pcal^B(u_r)$}
\end{overpic}
\end{center}
\caption{Case Fluvial $\rightarrow$ Fluvial, system \eqref{1in1outAA:junc_cond_*}. In this case curves $\phit_l$ and $\phit_r$ intersect inside the subcritical region. The solution is the intersection point $\su$.}\label{fig:AtoA_1}
\end{figure}
\begin{figure}
\begin{center}
\begin{overpic}
[width=0.9\textwidth]{./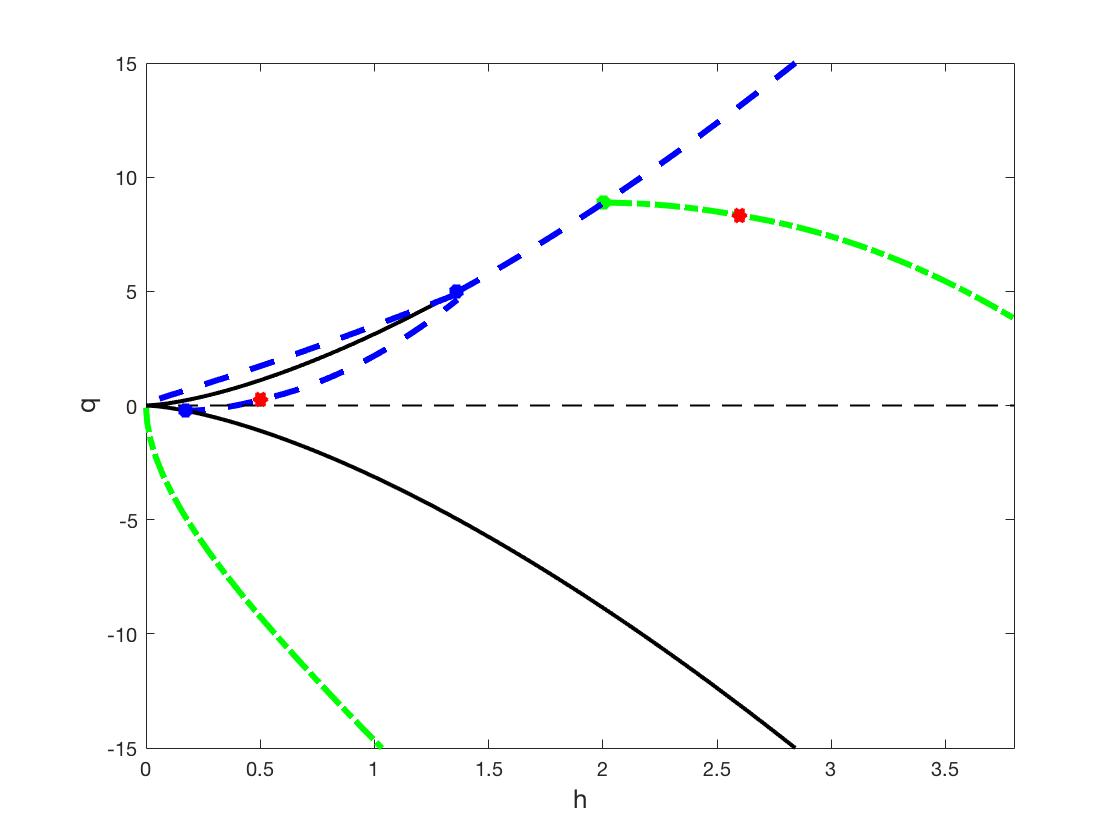}
\put(24,37){$u_r$}
\put(65,53){$u_l$}
\put(50,59){$u^+_{l,\Rcal}$}
\put(15,12){$\Ncal^A(u_l)$}
\put(20,60){$\Pcal^A(u_r)$}
\end{overpic}
\end{center}
\caption{Case Fluvial $\rightarrow$ Fluvial, system \eqref{1in1outAA:junc_cond_*}: curves $\phit_l$ and $\phit_r$ have empty intersection inside the subcritical region and $h_r<h_l$. The solution is the critical point $u^+_{l,\Rcal}$.}\label{fig:AtoA_2}
\end{figure}
\begin{figure}
\begin{center}
\begin{overpic}
[width=0.9\textwidth]{./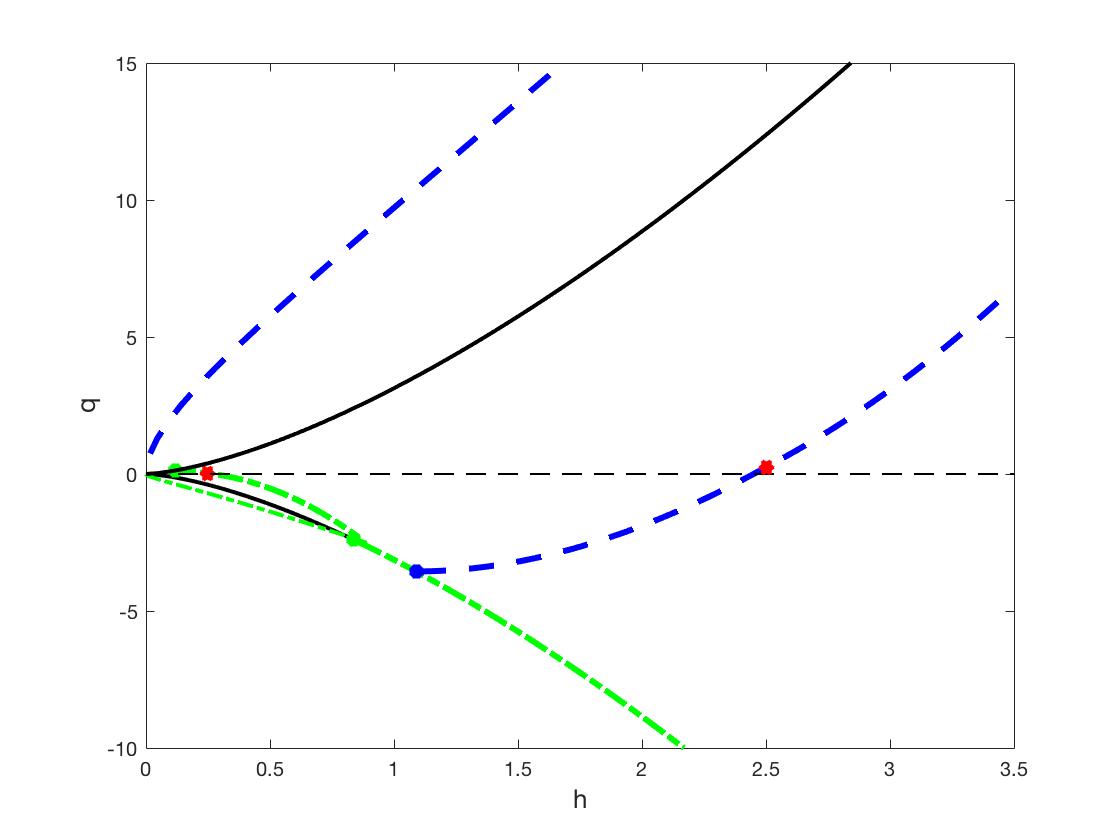}
\put(67,30){$u_r$}
\put(17,35){$u_l$}
\put(36,26){$u^-_{r,\Rcal}$}
\put(20,18){$\Ncal^A(u_l)$}
\put(20,60){$\Pcal^A(u_r)$}
\end{overpic}
\end{center}
\caption{Case Fluvial $\rightarrow$ Fluvial, system \eqref{1in1outAA:junc_cond_*}: curves $\phit_l$ and $\phit_r$ have empty intersection inside the subcritical region and $h_l<h_r$. The solution is the critical point $u^-_{r,\Rcal}$.}\label{fig:AtoA_3}
\end{figure}
\begin{prop} Under the subcritical condition on $u_l$ and $u_r$, the system \eqref{1in1outAA:junc_cond_*} admits a unique solution.
\end{prop}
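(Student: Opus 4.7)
\emph{Proof sketch.} The plan is to reformulate \eqref{1in1outAA:junc_cond_*} as the single requirement $\su=(\sh,\sq)\in\Ncal^A(u_l)\cap\Pcal^A(u_r)$ and then locate that intersection explicitly. First, I would show that the two Lax curves $q=\phit_l(h)$ and $q=\phit_r(h)$ cross at exactly one positive height. Since $\phit_l$ is strictly concave and $\phit_r$ strictly convex on $(0,\infty)$ (Sections~\ref{sec:phil}--\ref{sec:phir}), their difference $\psi(h):=\phit_l(h)-\phit_r(h)$ is strictly concave with $\psi(0)=0$, $\psi(h)\to-\infty$ as $h\to\infty$, and
\[
\psi'(0) \;=\; (v_l + 2\sqrt{gh_l}) + (2\sqrt{gh_r} - v_r) \;>\; 0,
\]
the positivity of both summands following from $|\Fr_l|<1$ and $|\Fr_r|<1$. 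Concavity then forces a unique positive zero $h_m$ of $\psi$, giving the unique crossing $u_m:=(h_m,\phit_l(h_m))$, in agreement with Remark~\ref{remark:std_sol}.

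The second step is a trichotomy on the Froude number $\Fr_m$ of $u_m$. When $|\Fr_m|<1$ (Figure~\ref{fig:AtoA_1}), $u_m$ already lies in $\Ical^A_1\cap\Ocal^A_1$, so I would set $\su=u_m$. When $\Fr_m>1$ (Figure~\ref{fig:AtoA_2}), the portion of $\phit_l$ above $\Ct^+$ is confined to $h<h^+_{l,\Rcal}$ and that of $\phit_r$ to $h>h^+_{r,\Scal}$, which forces $h^+_{r,\Scal}<h_m<h^+_{l,\Rcal}$. The critical endpoint $u^+_{l,\Rcal}$ then belongs to $\Ical^A_1$ and, lying on $\Ct^+$ with $h^+_{l,\Rcal}\geq h^+_{r,\Scal}$, also to $\Ocal^A_2$, so $\su=u^+_{l,\Rcal}$ works. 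The mirror case $\Fr_m<-1$ is handled symmetrically with $\su=u^-_{r,\Rcal}$ (Figure~\ref{fig:AtoA_3}).

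For uniqueness I would exploit the structural inclusions $\Ncal^A(u_l)\subset\{q\leq\Ct^+(h)\}$ and $\Pcal^A(u_r)\subset\{q\geq\Ct^-(h)\}$, so any $\su$ in the intersection is necessarily (weakly) subcritical. In the open subcritical strip the two admissible sets reduce to the Lax arcs $\Ical^A_1$ and $\Ocal^A_1$, whose unique crossing is $u_m$ by Step~1; this settles Case~(i) and leaves only boundary candidates in Cases~(ii)--(iii). On $\Ct^+$ one has $\Ncal^A(u_l)\cap\Ct^+=\{u^+_{l,\Rcal}\}$ and symmetrically $\Pcal^A(u_r)\cap\Ct^-=\{u^-_{r,\Rcal}\}$; exploiting $\psi<0$ on $(h_m,\infty)$ one compares $\phit_r$ with $\Ct^-$ at $h=h^-_{l,\Scal}$ to obtain $h^-_{r,\Rcal}<h^-_{l,\Scal}$ in Case~(ii), which keeps $u^-_{r,\Rcal}$ out of $\Ncal^A(u_l)$; the analogous inequality $h^+_{l,\Rcal}<h^+_{r,\Scal}$ handles Case~(iii) and the mirror candidate in Case~(i).

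The main obstacle will be the case bookkeeping in this uniqueness step: one has to verify that the super-critical sub-regions $\Ical^A_2,\Ical^A_3$ (contained in $\{q\leq\Ct^-(h)\}$) never meet $\Ocal^A_2,\Ocal^A_3$ (contained in $\{q\geq\Ct^+(h)\}$), and to match each of the three geometric configurations of $u_m$ with the correct interior/boundary candidate via the monotonicity of $\psi$ across $h_m$. No new estimate is needed: the argument is essentially a careful geometric reading of the half-Riemann characterizations developed in Section~\ref{sec:half-Riemann}.
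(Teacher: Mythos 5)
Your proposal is correct and follows essentially the same route as the paper: the same trichotomy (the curves $\phit_l$ and $\phit_r$ cross inside the subcritical strip, above $\Ct^+$, or below $\Ct^-$) with the same solutions $u_m$, $u^+_{l,\Rcal}$ and $u^-_{r,\Rcal}$ in the respective cases, the latter two lying on $\Ct^\pm$ and belonging to $\Ical^A_1\cap\Ocal^A_2$ resp. $\Ocal^A_1\cap\Ical^A_2$. Your concavity argument for the unique crossing of the Lax curves and the explicit exclusion of the spurious critical candidates simply supply the details behind what the paper's proof asserts as ``trivially'' / ``the only point that verifies the junction conditions''.
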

\begin{proof}
We distinguish two cases:\\
{\bf Case 1.} The two curves $\phit_l$ and $\phit_r$ intersect inside the subcritical region (Figure \ref{fig:AtoA_1}). In this case the solution is trivially the intersection point.\\
{\bf Case 2.} The two curves $\phit_l$ and $\phit_r$ do not intersect inside the subcritical region.
If $h_r<h_l$ (specifically $h^+_{r,\Scal}<h^+_{l,\Rcal}$, see Figure \ref{fig:AtoA_2}) then the only point that verifies the junction conditions \eqref{1in1outAA:junc_cond_*} is the \textit{critical} 
point $u^+_{l,\Rcal}$ given in \eqref{uplr}, thus
\begin{equation}
\su_1=\su_2=u^+_{l,\Rcal}.
\end{equation}
If $h_r>h_l$ (specifically $h^-_{r,\Rcal}>h^-_{l,\Scal}$, see Figure \ref{fig:AtoA_3}) the only point that verifies the junction conditions \eqref{1in1outAA:junc_cond_*} is the \textit{critical} point $u^-_{r,\Rcal}$ given in \eqref{umrr}, then 
\begin{equation}
\su_1=\su_2=u^-_{r,\Rcal}.
\end{equation}
\end{proof}

\begin{remark} Notice that the proposed procedure may give a solution which is different from the classical solution of the Riemann problem on a single channel, given by the intersection point of $\phi_l$ and $\phi_r$ curves, see Remark \ref{remark:std_sol}.
\end{remark}


\subsection*{Case B$\rightarrow$A (Torrential $\rightarrow$ Fluvial)}
Here we assume to have a left state $u_l$ and a right state $u_r$ with $\Fr_l>1$ and $|\Fr_r|<1$.
The solution at the junction consists of two waves separated by intermediate states $\su_1$ and $\su_2$ such that 
\begin{equation}\label{1in1outBA:junc_cond_*}
\left\{\begin{array}{l}
\su_1\in\Ncal^B(u_l),
\smallskip\\
\su_2\in\Pcal^A(u_r),
\smallskip\\
\sq_1= \sq_2 = \sq,
\smallskip\\
\sh_1 = \sh_2,
\end{array}\right.
\end{equation}
with $\sh_1,\sh_2>0$. Next Proposition provides the results about
solutions, while the illustrating figures are postponed to the Appendix.
\begin{prop} System \eqref{1in1outBA:junc_cond_*} 
admits a solution if the two regions $\Ncal^B$ and $\Pcal^A$ intersect in the subcritical set $\{(h,q): h>0, \ \Ct^-(h) \leq q \leq \Ct^+(h)\}$ or if $u_l\in\Pcal^A{(u_r)}$.
\end{prop}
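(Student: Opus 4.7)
My plan is to exploit the coupling conditions to reduce the four equations in \eqref{1in1outBA:junc_cond_*} to a single set-membership statement, and then check that each sufficient condition in the statement provides a witness. Combining $\sh_1=\sh_2$ with $\sq_1=\sq_2$ forces $\su_1=\su_2$; writing $\su$ for this common value, \eqref{1in1outBA:junc_cond_*} becomes the single requirement
\[
\su\in \Ncal^B(u_l)\cap\Pcal^A(u_r),
\]
with $\sh>0$. Hence the proof reduces to producing such a $\su$ under each of the two hypotheses in the statement.

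Under the first hypothesis, any point of the intersection lying in the subcritical strip $\{h>0,\ \Ct^-(h)\le q\le\Ct^+(h)\}$ may be selected as $\su$: by the construction of $\Ncal^B(u_l)$ in Subsection~\ref{sec:leftRP} and of $\Pcal^A(u_r)$ in Subsection~\ref{sec:rightRP}, the waves from $u_l$ to $\su$ on the incoming canal have non-positive speed while those from $\su$ to $u_r$ on the outgoing canal have non-negative speed, so no wave crosses the junction. Under the second hypothesis the natural candidate is $\su=u_l$ itself: the membership $u_l\in\Pcal^A(u_r)$ is given by assumption, and $u_l\in\Ncal^B(u_l)$ holds because the Riemann problem on the incoming canal with identical left and right data admits the constant solution $u\equiv u_l$, whose (empty) set of waves vacuously satisfies the non-positive speed requirement. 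The resulting boundary profile leaves the incoming canal in its torrential state $u_l$ while the outgoing canal carries the full wave pattern from $u_l$ to $u_r$, transmitting the supercritical regime downstream.

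The only delicate step, and hence the main obstacle, is to confirm that the choice $\su=u_l$ in the second case is consistent with the explicit set-theoretic description \eqref{eq:NB} of $\Ncal^B(u_l)$, since $u_l$ lies outside the subcritical strip. This check is short: by the computations at the end of Subsection~\ref{sec:phil}, $\Fr_l>1$ implies $h_l<h^+_{l,\Scal}\le h_l^*$, so $h_l\notin[h^+_{l,\Scal},h_l^*]$ and $u_l$ does not belong to the portion of $\phit_l$ removed in passing from $\Ncal^A(u_l)$ to $\Ncal^B(u_l)$. Everything else in the argument is a direct unfolding of the definitions collected in Section~\ref{sec:half-Riemann}.
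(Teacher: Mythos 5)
Your argument is correct and it establishes exactly the stated sufficiency, but it takes a different route from the paper. You first reduce \eqref{1in1outBA:junc_cond_*} via $\sh_1=\sh_2$, $\sq_1=\sq_2$ to the single condition $\su\in\Ncal^B(u_l)\cap\Pcal^A(u_r)$ and then exhibit a witness under each hypothesis; the paper instead runs an exhaustive geometric case analysis on whether the curve $h\Scal_1(h_l^*,v_l^*;h)$ and $\phit_r(h)$ meet inside the subcritical strip (Figures \ref{fig:BtoA_1}--\ref{fig:BtoA_3}), and in each configuration identifies the explicit boundary state (the subcritical intersection point, the critical point $u^-_{r,\Rcal}$, or $u_l$ itself), discusses which solution to select when several are admissible, and notes the non-existence case when $u_l\notin\Pcal^A(u_r)$ and the curves do not meet. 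Your approach is shorter and cleanly separates the two sufficient conditions, but it yields less information: it does not produce the explicit solutions, nor address multiplicity/selection or the failure case, all of which the paper's proof supplies. One caveat on your ``delicate step'': showing $h_l\notin[h^+_{l,\Scal},h_l^*]$ only proves that $u_l$ avoids the arc removed in \eqref{eq:NB}; it does not place $u_l$ in the region described by that formula, which lies entirely in $\{q\le\Ct^+(h)\}$ and therefore, read literally, excludes the supercritical point $u_l$. The membership $u_l\in\Ncal^B(u_l)$ must rest on your first justification --- the definitional characterization of $\Ncal(u_l)$ through waves of non-positive speed, satisfied vacuously by the constant solution --- which is also what the paper implicitly uses when it assigns $\su_1=\su_2=u_l$ in its Cases 2.1 and 2.2; with that reading your proof is sound.
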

\begin{proof}
We distinguish two cases:\\
{\bf Case 1.} The two curves $h\Scal_1(h_l^*,v_l^*;h)$ and $\phit_r(h)$ intersect inside the subcritical region (Figure \ref{fig:BtoA_1} of the Appendix). 
If $u_l\notin\Pcal^A(u_r)$ the solution is trivially the intersection point. On the contrary, if $u_l\in\Pcal^A(u_r)$ we have two possible solutions: the intersection point inside the subcritical region or the starting supercritical value $u_l$. So, we may obtain both fluvial or torrential regime. Coherently with the solution that would be obtained in a single channel we choose the fluvial regime assigning as solution the intersection point inside the subcritical region.\\
{\bf Case 2.} The two curves $h\Scal_1(h_l^*,v_l^*;h)$ and $\phit_r(h)$ do not intersect inside the subcritical region. We distinguish the two subcases shown in Figure \ref{fig:BtoA_2} and in Figure \ref{fig:BtoA_3} of the Appendix.\\
{\bf Case 2.1.} Referring to Figure \ref{fig:BtoA_2}, the point $u_l$ may fall or not in the $\Pcal^B(u_r)$ region. If $u_l\in\Pcal^A(u_r)$ the only possible solution is $u_l$ self and $\su_1=\su_2= u_l$; if on the contrary $u_l\notin\Pcal^A(u_r)$, system  \eqref{1in1outBA:junc_cond_*} does not admit a solution.\\
{\bf Case 2.2.} Referring Figure \ref{fig:BtoA_3}, if $u_l\notin\Pcal^A(u_r)$ the only possible solution is the \textit{critical} point $u^-_{r,\Rcal}$ defined in \eqref{umrr}, then $\su_1=\su_2=u^-_{r,\Rcal}$;
If $u_l\in\Pcal^A(u_r)$, both $u_l$ self and the critical point $u^-_{r,\Rcal}$ are admissible solutions. 
Coherently with the solution obtained in the previous subcase, we select again $u_l$ as solution and we assign $\su_1=\su_2= u_l$. So, the torrential regime propagates on the outgoing canal.

\end{proof}

\subsection*{Case B$\rightarrow$B (Torrential $\rightarrow$ Torrential)}
As shown in Figure \ref{fig:BtoB} of the Appendix, the two admissible regions $\Ncal^B$ and $\Pcal^B$ may have empty intersection. So, in this case there exists 
a solution at the junction which verifies the conditions of conservation of the mass and of equal heights, if and only if $u_l\in\Pcal^B(u_r)$, i.e. $\su_1=\su_2=u_l$.


\subsection{Other possible conditions at the junction}
Assuming different conditions at the junction give rise to new possible solutions. In canals network problems, it is usual to 
couple the conservation of the mass with the conservation of energy at the junctions.
The \textit{specific energy} $E$ is a useful parameter in channel flow and it is defined as
\begin{equation}\label{eq:energy}
E = h +\frac{v^2}{2g}.
\end{equation}
For a given flow rate, there are usually two states possible for the same specific energy. 
Studying $E$ as a function of $h$ for constant $q$, there is a minimum value of $E$ at a certain value of $h$ called the \textit{critical depth},
\begin{equation}\label{eq:critical_h}
h=h_c =\left(\frac{q^2}{g}\right)^{\frac 1 3}.
\end{equation}
￼Critical depth $h_c$ corresponds to some critical channel velocity $v_c$ defined by $\Fr_c=1$.
For $E < E_{min}$ solution does not exists, and thus such a flow is impossible physically. For $E > E_{min}$ solutions are possible: large depth with $|\Fr|< 1$ subcritical flow, and small depth with $|\Fr| > 1$ supercritical flow.

In our case,  assuming equal energy at the junction gives
\begin{equation}\label{eq:energy_cons}
\frac{v_1^2}{2}+g h_1=\frac{v_2^2}{2}+g h_2.
\end{equation}
Moreover, assuming $\sq_1=\sq_2=\sq$ to be constant (and known)
we get from \eqref{eq:energy_cons}
$$
\frac{g h_1\Fr_1^2}{2}+g h_1=\frac{g h_1^3\Fr_1^2}{2 h_2^2}+g h_2,
$$
where $\Fr_1=v_1/\sqrt{g h_1}$ and where we used the following relations
$$ v_1^2 =g h_1\Fr_1^2 \quad\mbox{ and }\quad v_2^2=\frac{v_1^2 h_1^2}{h_2^2} = \frac{g h_1^3\Fr_1^2}{ h_2^2}.$$
Then, we have two possible solution for the heights values at the junction:
\begin{equation}\label{eq:equal_h}
\sh_1=\sh_2 \ \mbox{ (equal heigths)}
\end{equation}
or
\begin{equation}
\frac{\sh_2}{\sh_1} = \frac{\Fr_1^2}{4}\left(1+\sqrt{1+\frac{8}{\Fr_1^2}}\right).
\end{equation}
So, for $\sh_1\ne\sh_2$ we get new possible solutions at the junction with $(\sh_1,\sq)$ subcritical and $(\sh_2,\sq)$ supercritical or vice-versa.
Specifically, case Torrential$\rightarrow$Fluvial and case Torrential$\rightarrow$Torrential may admit solution even if their admissible regions have empty intersection in the subcritical set.

\begin{remark}
In the case of a simple junction, the natural assumption (consistent with the dynamic of shallow-water equations) should be to assume the conservation of the momentum. With our notation, the relation \eqref{eq:equal_h} or \eqref{eq:energy_cons} sholud be replaced by the following:
\begin{equation}\label{eq:momentun_cons}
\frac{q_1^2}{h_1}+\frac 1 2 g h_1^2 = \frac{q_2^2}{h_2}+\frac 1 2 g h_1^2.
\end{equation}
By the same reasoning used before in the case of the conservation of energy, from \eqref{eq:momentun_cons} we get
$$
\left(\frac{h_2}{h_1}\right)^3-\left(2\Fr_1^2+1\right)\left(\frac{h_2}{h_1}\right)+2\Fr_1^2 = 0.
$$
Then, we have again two possible relations for the heights values at the junction:
\begin{equation}
\sh_1=\sh_2 \ \mbox{ (equal heigths)}
\end{equation}
or
\begin{equation}\label{eq:ratio_momentum}
\frac{\sh_2}{\sh_1} = \frac 12 \left(-1+\sqrt{1+8\Fr_1^2}\right).
\end{equation}
So again, for $\sh_1\ne\sh_2$ we get new possible solutions at the junction. Case Torrential$\rightarrow$ Fluvial and case Torrential$\rightarrow$Torrential may have solution and specifically we have that in Figure \ref{fig:BtoA_2}, points $(h^*_l,q_l)$ and $(h_l,q_l)$ verify \eqref{eq:ratio_momentum} (see \eqref{eq:hstarl}) and if $u_l\in\Pcal^A(u_r)$ they are candidate to be the new solution. Even in the case described in Figure \ref{fig:BtoB}, if $u_l\in\Pcal^B(u_r)$ the points $(h^*_l,q_l)$ and $(h_l,q_l)$ are the solution at the junction.
\\
Let us conclude observing that for  appropriate values of $(h_l,q_l)$,  for Torrential$\rightarrow$ Fluvial we would get the same solution considering our simple network as a simple canal, i.e. a stationary shock called \textit{hydraulic jump} characterized indeed by the conservation of the momentum in the transition from a supercritical to subcritical flow \cite{Dasgupta2011}.
\end{remark}

\section{Numerical tests}\label{sec:num_tests}
In this Section we illustrate the results of Section \ref{sec:junction}
by means of numerical simulations. We first give a sketch of the adopted numerical procedure and then we focus on two numerical tests 
which illustrate the regime transitions from fluvial to torrential and viceversa.
The latter depend on well chosen initial conditions for Riemann problems at the junction.

We consider again a network formed by  two canals intersecting at one single point, which represents the junction.
Following \cite{BPQ2016}, we use a high order Runge-Kutta Discontinuous Galerkin scheme to numerically solve system \eqref{eq:shallow_water_homog} on both canals $1$ and $2$:
\begin{equation}\label{eq:test_sys}
\begin{array}{cc}
\partial_t u_1 +\partial_x f(u_1)=0, & \mbox{for } x<0,
\smallskip\\
\partial_t u_2 +\partial_x f(u_2)=0, &  \mbox{for } x<0.
\end{array}
\end{equation}
The 1D domain of each canal is discretized into cells $C_m=[x_{m-\frac12},x_{m+\frac12}]$, with $x_m=(x_{m-\frac12}+x_{m+\frac12})/2$, $m=1,\ldots,M$, being $M$ the total number of computational cells. The DG method for \eqref{eq:shallow_water_homog} is formulated by multiplying the equation system by some test functions $w$, integrating over each computational cell, and performing integration by parts. Specifically, we seek the approximation $U= (u_1, u_2)$ with $u_i\in{W}_{\Dx} = \{w: w|_{C_m}\in P^k(C_m), m=1,\ldots,M\}$, $i=1,2$, where $P^k(C_m)$ is the space of polynomials of degree at most $k$ on cell $C_m$, such that $\forall\ w\in W_{\Dx}$ 
\begin{eqnarray}
\label{DG_std}
\int_{C_m}w(x)\partial_t U dx = \int_{C_m} f(U)\partial_x w(x) dx &-& \left(\hat f_{m+\frac12} w^-_{m+\frac12}-\hat f_{m-\frac12} w^+_{m-\frac12} \right).
\end{eqnarray}
Terms $w^{\pm}_{m+\frac12}$ denote left and right limits of the function values and the numerical fluxes $\hat f_{m\pm\frac12} = f(U^-_{m\pm\frac12},U^+_{m\pm\frac12})$ are approximate Riemann solvers. In our simulations, we use the Lax-Friedrich flux.
For implementation, each component of the approximate solution $U$, e.g. $u_1$, on mesh $C_m$ is expressed as
\begin{equation}
u_1(x,t)=\sum_{l=0}^{k} \hat{u}_{m}^{1, l}(t)\psi_m^l(x),
\end{equation}
where $\{\psi_m^l(x)\}_{l=0}^k$ is the set of basis functions of $P^k(C_m)$. Specifically, we choose the Legendre polynomials as local orthogonal basis of $P^k(C_m)$ and take the test function $w(x)$ in eq.~\eqref{DG_std} exactly as the set of basis functions $\psi_m^l(x)$, $l=0 \cdots k$, assuming the polynomial degree $k=2$. The equation system \eqref{DG_std} can then be evolved in time via the method of lines approach by a TVD RK method. More implementation details for RKDG methods can be found in the original paper \cite{CS1989} and the review article \cite{CS2001}.

Once the numerical procedure on both canals has been settled, the two systems in \eqref{DG_std} have to be coupled with boundary conditions. At the junction the boundary values is settled as follows: 
at each time step and at each RK stage via the method-of-line approach, 
we set as left state in \eqref{1in1outAA:junc_cond_*} (or \eqref{1in1outBA:junc_cond_*}) the approximate solution from canal $1$ at the left limit of the junction, i.e. 
$$
u_l \approx U_l= \lim_{x\rightarrow x_{M+\frac12}^{-}} U_1(x,\cdot)
$$ 
with $[x_{M-\frac12}, x_{M+\frac12}]$ being the right-most cell in the 1D discretization of the incoming canal $1$, and as right state in \eqref{1in1outAA:junc_cond_*} (or \eqref{1in1outBA:junc_cond_*}) the 
approximate solution from canal $2$ at the right limit of the junction, i.e. 
$$
u_r \approx U_r=\lim_{x\rightarrow x_{\frac12}^{+, 2}} U_2(x,\cdot), \quad
$$ 
with $[x_{\frac12}, x_{\frac32}]$ being the left-most cell in the 1D discretization of the outgoing canal $2$. With this two values, we compute the intermediate states $u^b_1$ and $u^b_2$ by solving \eqref{1in1outAA:junc_cond_*} (or \eqref{1in1outBA:junc_cond_*}) and, preserving the mass we
directly assign the numerical fluxes at the junction as 
$$
\hat{f}_{M_+\frac12} \doteq f(u_1^b) \quad \mbox{for the canal $1$},
\quad
\hat{f}_{\frac12} \doteq f(u_2^b) \quad \mbox{for the canal $2$}.
$$
Finally, in our simulations we assume Neumann boundary conditions at the free extremity of the channels.

Applying this numerical procedure, in Figure \ref{fig:testAinA} and \ref{fig:testBinA} we give two examples which illustrate the solution that is obtained in the regime transitions from fluvial to torrential and viceversa.
In Figure \ref{fig:testAinA}, we assume to have a starting configuration given by the following subcritical constant states: $u_l=(0.25,0.025)$ on canal $1$ (left) and $u_r=(2.5,0.25)$ on canal $2$ (right).
We are in the situation showed in Figure \ref{fig:AtoA_3} and the boundary value at the junction is defined by the critical value $u^-_{r\Rcal}\approx(1.088,-3.55)$. The solution is a backward water movement along canal $1$ with a torrential regime.
In Figure \ref{fig:testBinA}, we assume as initial state on canal $1$ the supercritical constant value $u_l=(0.2,3)$ and on canal $2$ the subcritical constant value $u_r=(1.8,4)$. We are in the configuration given in Figure \ref{fig:BtoA_2} and the boundary value at the junction is defined by the starting point $u_l$. The solution is a forward movement with positive velocity, so the torrential regime propagates on canal $2$.    
\begin{figure}
\begin{center}
\begin{overpic}
[width=0.9\textwidth]{./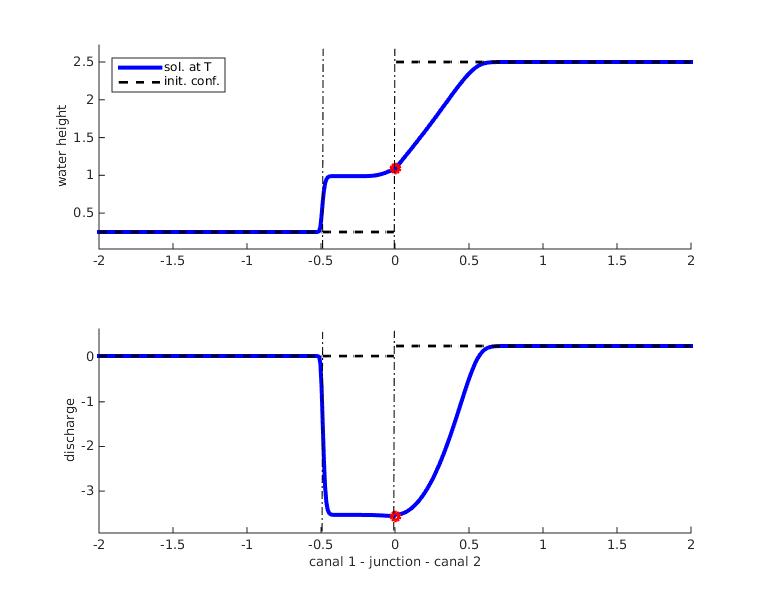}
\put(25,60){$F_r<1$}
\put(43,60){$F_r>1$}
\put(65,60){$F_r<1$}
\put(38,51){$\leftarrow$}
\put(25,20){$F_r<1$}
\put(43,20){$F_r>1$}
\put(65,20){$F_r<1$}
\end{overpic}
\end{center}
\caption{Numerical test case for the configuration given in Fig. \ref{fig:AtoA_3}.}\label{fig:testAinA}
\end{figure}
\begin{figure}
\begin{center}
\begin{overpic}
[width=0.9\textwidth]{./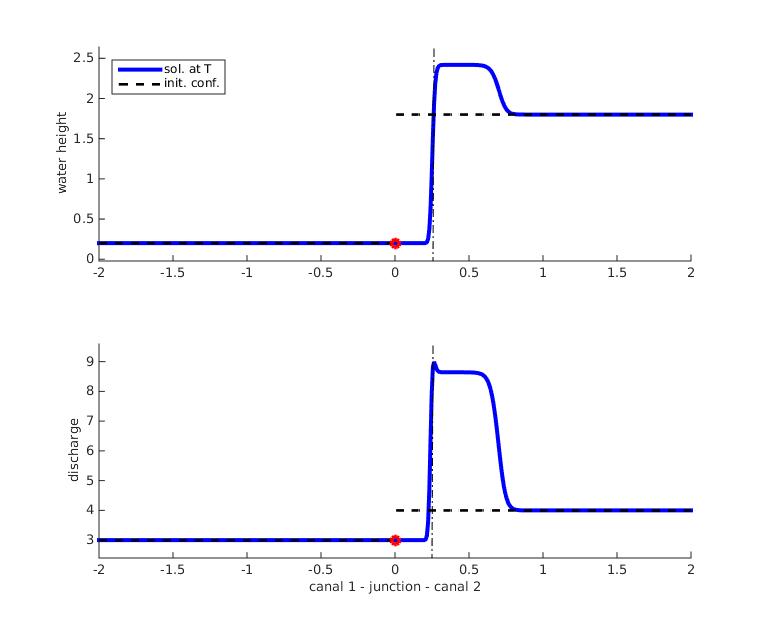}
\put(45,60){$F_r>1$}
\put(70,60){$F_r<1$}
\put(58,60){$\rightarrow$}
\put(45,20){$F_r>1$}
\put(70,20){$F_r<1$}
\end{overpic}
\end{center}
\caption{Numerical test case for the configuration given in Fig. \ref{fig:BtoA_2}.}\label{fig:testBinA}
\end{figure}


\section{Conclusions}
This paper deals with open canal networks. The interest stems out
of applications such as irrigation channels water management.
We base our investigations on the well-known Saint-Venant or
shallow water equations. Two regimes exist for this hyperbolic
system of balance laws: the fluvial, corresponding to eigenvalues
with different sign, and the torrential, corresponding to both positive
eigenvalues.
Most authors focused the attention on designing and analysing
network dynamics for the fluvial regime, while here we extend the theory
to include regime transitions.
After analyzing the Lax curves for incoming and outgoing canals,
we provide admissibility conditions for Riemann solvers, describing
solutions for constant initial data on each canal.
Such analysis allows to define uniquely dynamics according to a set of conditions
at junctions, such as conservation of mass, equal water height
or equal energy. More precisely, the simple case
of one incoming and outgoing canal is treated showing that, 
already in this simple example,
regimes transitions appear naturally at junctions.
Our analysis is then visualized by numerical simulations based on
Runge-Kutta Discontinuous Galerkin methods.


\bibliography{half-riemann}{}
\bibliographystyle{plain}

\section*{Appendix}
Here we collect additional figures illustrating attainable regions for half-riemann problems and solutions for a simple channel. 
Figures \ref{fig:out_A}--\ref{fig:out_C} refer to the right-half Riemann problem described in Section \ref{sec:rightRP}. 
They show the regions of admissible states such that waves on the outgoing canals do not propagate into the junction, given a right state $u_r$ such that $|\tilde\Fr_r|<1$, $\tilde\Fr_r>1$ and $\tilde\Fr_r<-1$ respectively.
\\
Figures \ref{fig:BtoA_1}--\ref{fig:BtoB} refer to Section \ref{sec:junction} in which we study the possible solutions at a simple junction for different flow regimes, assuming the conservation of mass and equal heights at the junction. Specifically,  Figures \ref{fig:BtoA_1}--\ref{fig:BtoA_3} illustrate the  possible configurations and their associated solution that may occur during the transition from torrential to fluvial regime. The last Figure \ref{fig:BtoB} shows instead the only possible configuration that admits a solution for the torrential flow regime.
 
\begin{figure}
\begin{center}
\begin{overpic}
[width=0.9\textwidth]{./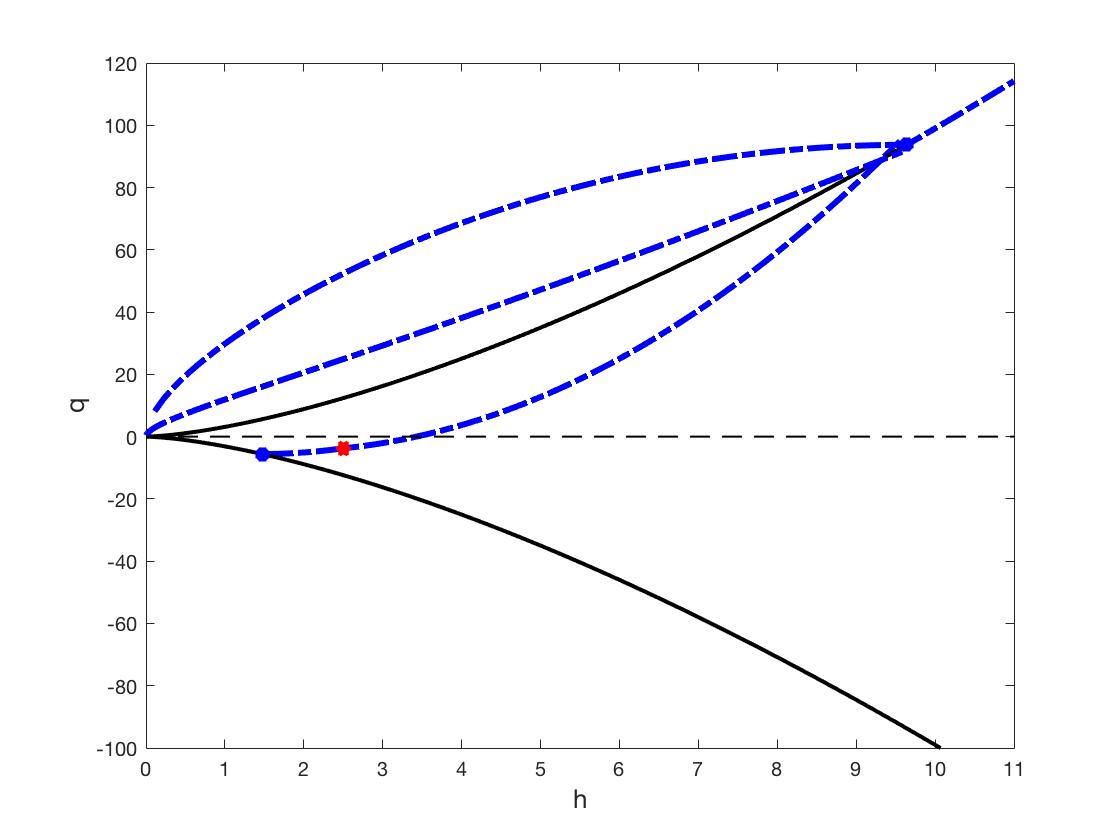}
\put(32,33){$u_r$}
\put(22,31){$u^-_{r,\Rcal}$}
\put(81,59){$u^+_{r,\Scal}$}
\put(25,60){$\Ocal^A_2$}
\put(35,48){$\Ocal^A_3$}
\put(65,40){$\Ocal^A_1$}\put(62,43){$\huge\nwarrow$}
\put(83,65){$\Ct^+$}
\put(49,60){\makebox(0,0){\rotatebox{15}{$q=\tilde\Scal^{-1}_1(u^{+}_{r,\Scal};h)$}}}
\end{overpic}
\end{center}
\caption{Right-half Riemann problem, Section \ref{sec:rightRP}. Region $\Pcal^A(u_r)=\Ocal^A_1\bigcup\Ocal^A_2\bigcup\Ocal^A_3$ defined by \eqref{eq:OA1}-\eqref{eq:OA3} where $u_r$ is such that $|\tilde\Fr_r|<1$.}\label{fig:out_A}
\end{figure}
\begin{figure}
\begin{center}
\begin{overpic}
[width=0.9\textwidth]{./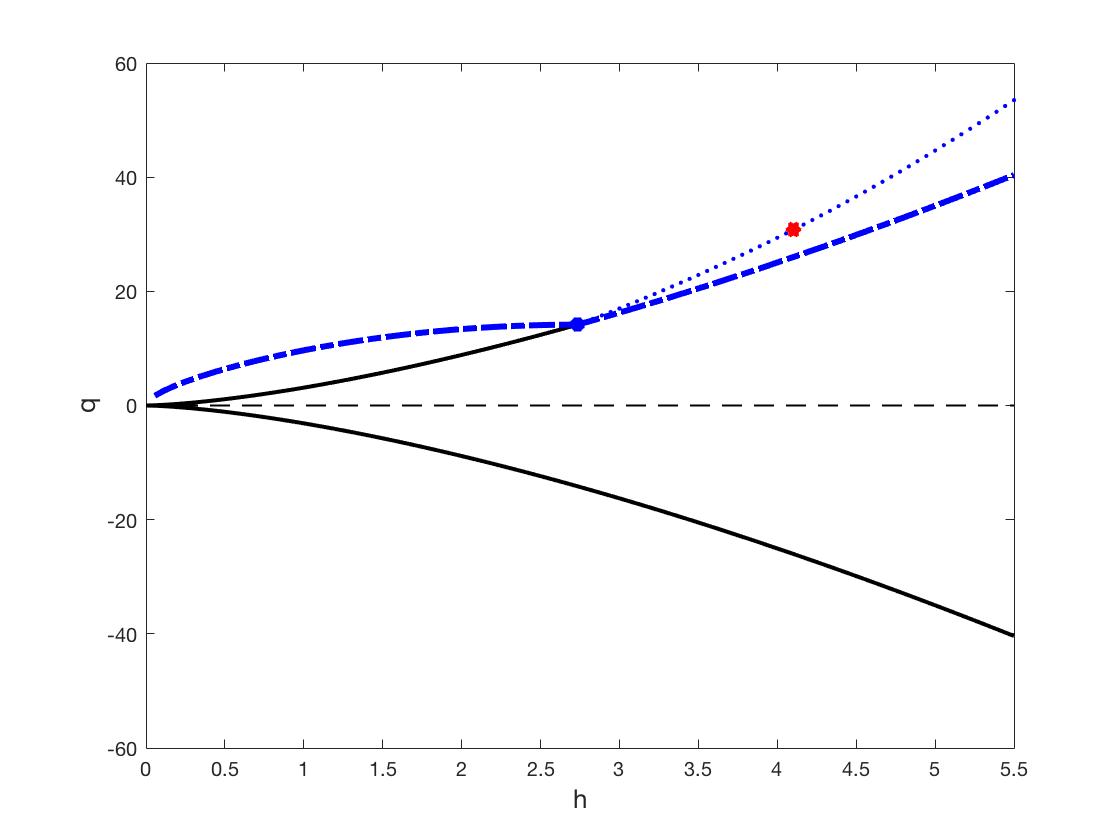}
\put(69,57){$u_r$}
\put(51,43){$u^+_{r,\Rcal}$}
\put(38,58){$\Pcal^B(u_r)$}
\put(78,51){$\Ct^+$}
\put(30,47){\makebox(0,0){\rotatebox{10}{$q=\tilde\Scal^{-1}_1(u^{-}_{r,\Rcal};h)$}}}
\end{overpic}
\end{center}
\caption{Right-half Riemann problem, Section \ref{sec:rightRP}. Region $\Pcal^B(u_r)$ bounded by $q=\tilde\Scal_2(u^{-}_{l,\Rcal};h)$ and $q=\Ct^+(h)$ as defined in \eqref{eq:PB} where $u_r$ is such that $\tilde\Fr_r>1$.}
\label{fig:out_B}
\end{figure}
\begin{figure}
\begin{center}
\begin{overpic}
[width=0.9\textwidth]{./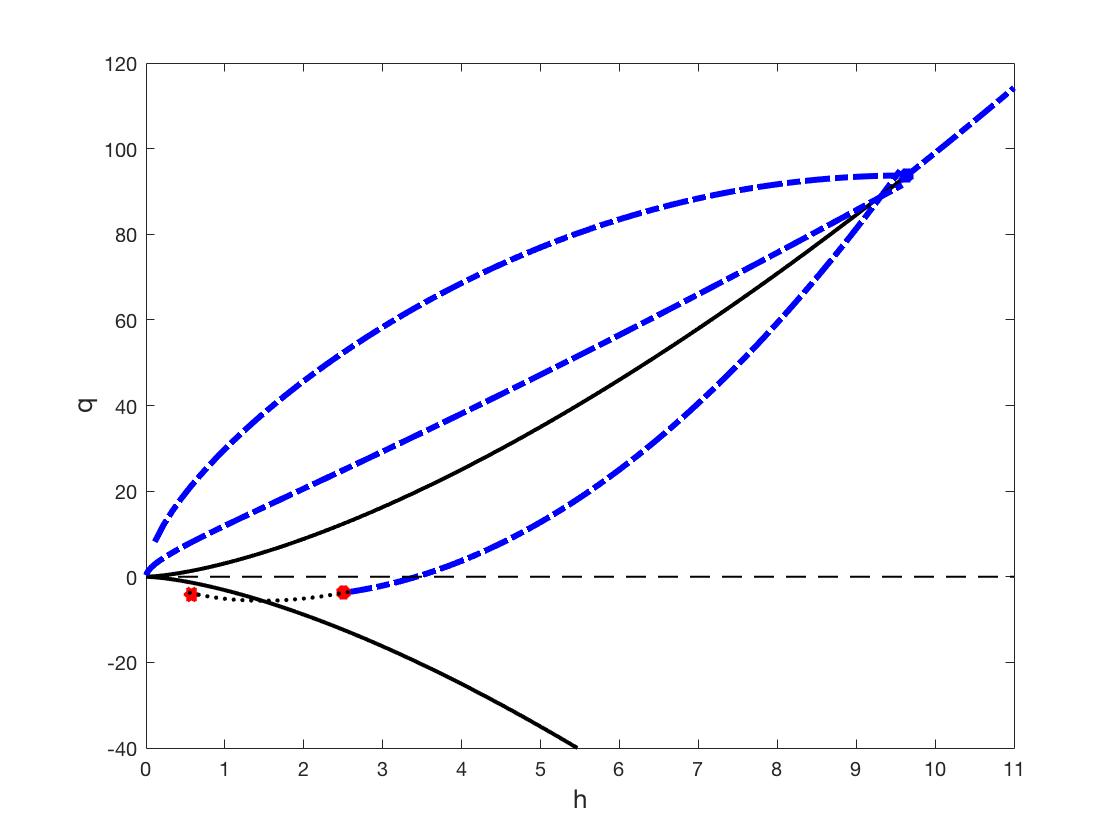}
\put(16,19){$u_r$}
\put(31,19){$u^*_r$}
\put(80,56){$u^{*,+}_{r,\Scal}$}
\put(25,60){$\Ocal^{*,A}_2$}
\put(45,45){$\Ocal^{*,A}_3$}
\put(64,29){$\Ocal^{*,A}_1$}\put(61,32){$\huge\nwarrow$}
\put(83,63){$\Ct^+$}
\put(43,53){\makebox(0,0){\rotatebox{24}{$q=\tilde\Scal^{-1}_1(u^{*,+}_{r,\Scal};h)$}}}
\end{overpic}
\end{center}
\caption{Right-half Riemann problem, Section \ref{sec:rightRP}. Region $\Pcal^C(u_r)=\Ocal^{*,A}_1\bigcup\Ocal^{*,A}_2\bigcup\Ocal^{*,A}_3$ given in \eqref{eq:PC} where $u_r$ is such that $\tilde\Fr_r<-1$.}
\label{fig:out_C}
\end{figure}
\begin{figure}
\begin{center}
\begin{overpic}
[width=0.9\textwidth]{./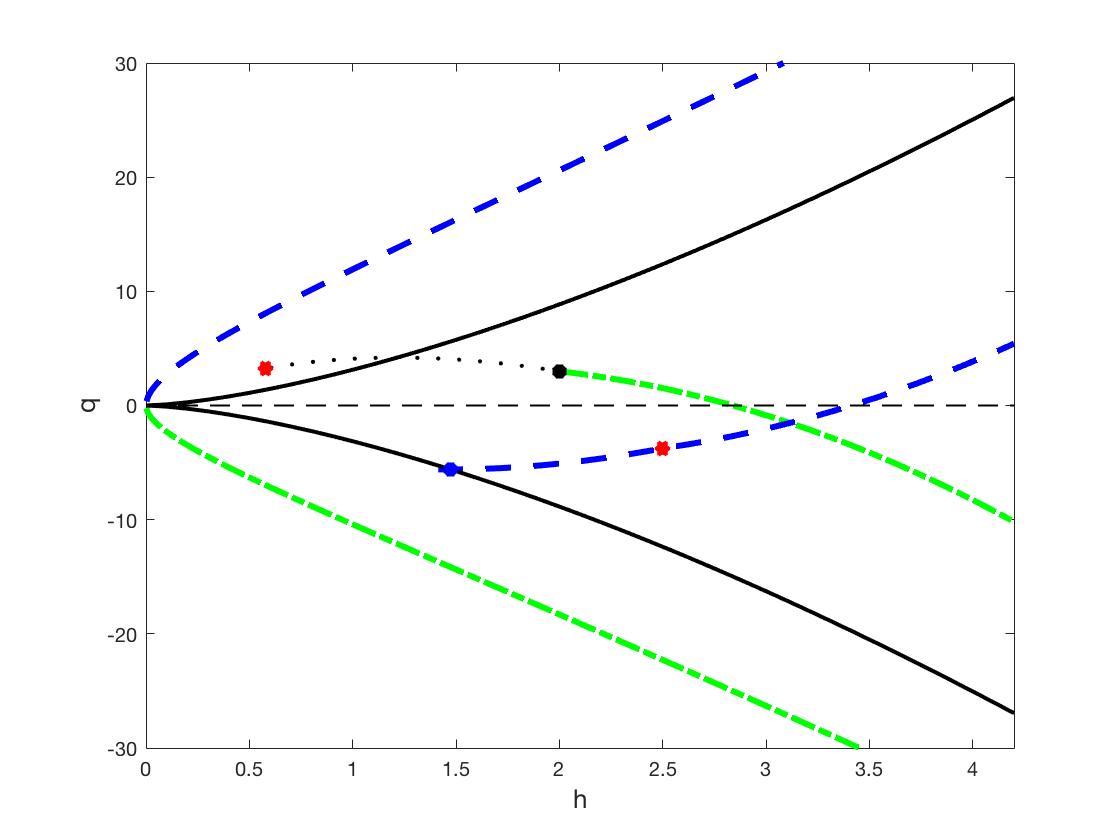}
\put(58,32){$u_r$}
\put(23,43){$u_l$}
\put(50,43){$u^*_l$}
\put(70,34){$\bf\huge\uparrow$}
\put(70,31){$\su$}
\put(20,18){$\Ncal^B(u_l)$}
\put(30,60){$\Pcal^A(u_r)$}
\end{overpic}
\end{center}
\caption{Case Torrential $\rightarrow$ Fluvial, system \eqref{1in1outBA:junc_cond_*}. In this case the curve $h\Scal_1(h^*,v^*;h)$ and $\phit_r(h)$ intersect inside the subcritical region. The solution is the intersection point $\su$.}\label{fig:BtoA_1}
\end{figure}
\begin{figure}
\begin{center}
\begin{overpic}
[width=0.9\textwidth]{./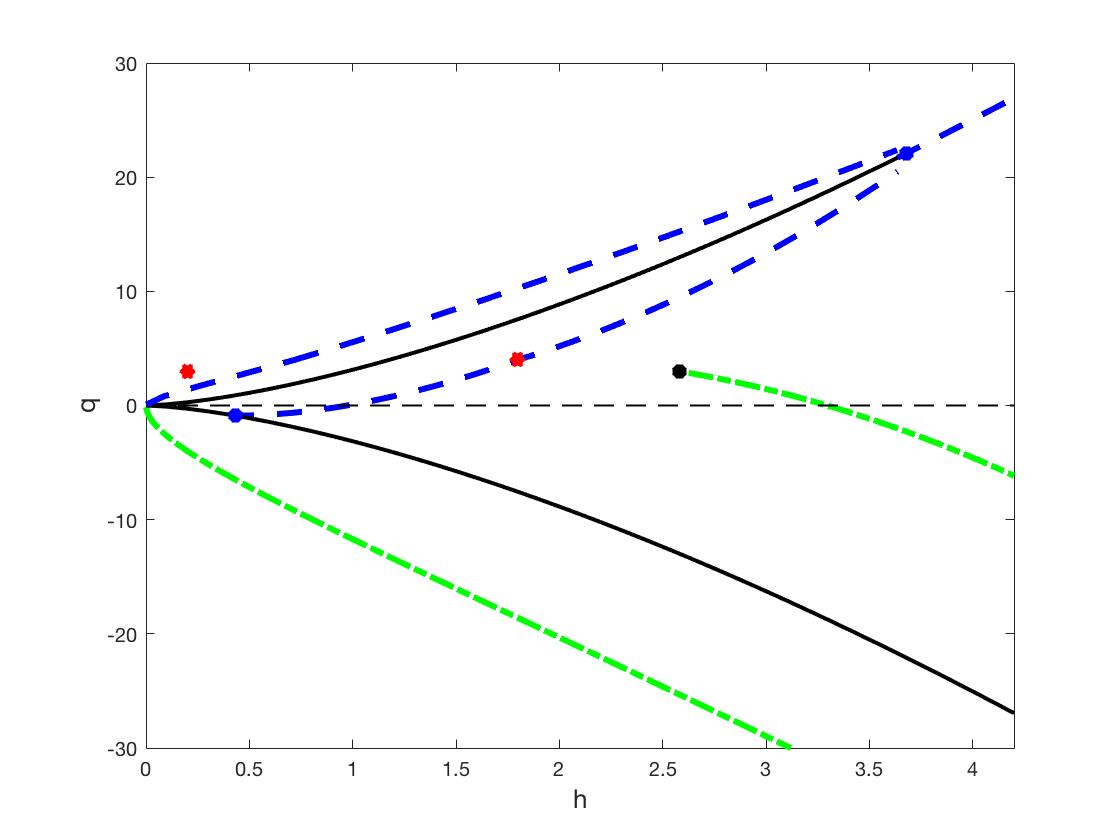}
\put(47,41){$u_r$}
\put(15,43){$u_l$}
\put(60,43){$u^*_l$}
\put(20,18){$\Ncal^B(u_l)$}
\put(40,54){$\Pcal^A(u_r)$}
\end{overpic}
\end{center}
\caption{Case Torrential $\rightarrow$ Fluvial, system \eqref{1in1outBA:junc_cond_*}. In this case the curve $h\Scal_1(h^*,v^*;h)$ and $\phit_r(h)$ have empty intersection inside the subcritical region. This configuration is an example in which system \eqref{1in1outBA:junc_cond_*} does not admit a solution.}\label{fig:BtoA_2}
\end{figure}
\begin{figure}
\begin{center}
\begin{overpic}
[width=0.9\textwidth]{./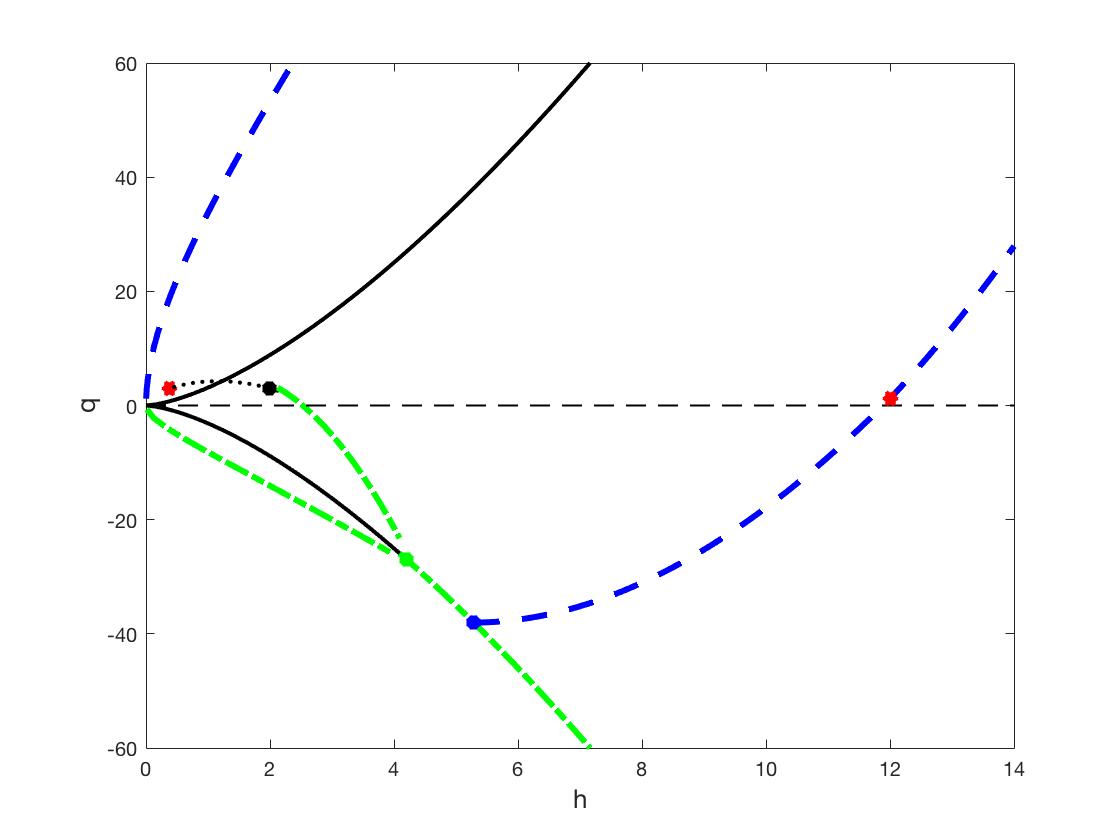}
\put(77,41){$u_r$}
\put(15,42){$u_l$}
\put(24,41){$u^*_l$}
\put(36,18){$u^-_{r,\Rcal}$}
\put(20,18){$\Ncal^B(u_l)$}
\put(18,65){\makebox(0,0){\rotatebox{45}{$\Pcal^B(u_r)$}}}
\end{overpic}
\end{center}
\caption{Case Torrential $\rightarrow$ Fluvial, system \eqref{1in1outBA:junc_cond_*}. In this case the curve $h\Scal_1(h^*,v^*;h)$ and $\phit_r(h)$ have empty intersection inside the subcritical region. The solution is the point $u^-_{r,\Rcal}$. }\label{fig:BtoA_3}
\end{figure}
\begin{figure}
\begin{center}
\begin{overpic}
[width=0.9\textwidth]{./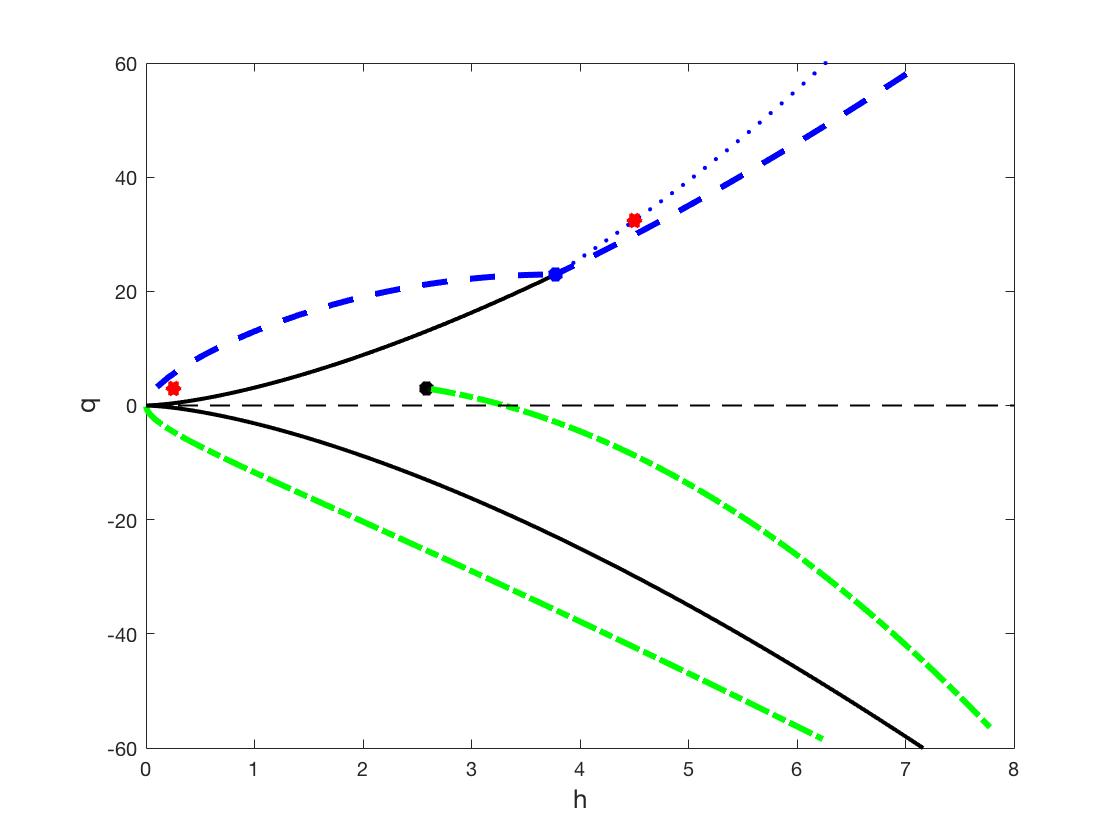}
\put(55,57){$u_r$}
\put(17,40){$u_l$}
\put(37,42){$u^*_l$}
\put(25,15){$\Ncal^B(u_l)$}
\put(30,57){$\Pcal^B(u_r)$}
\end{overpic}
\end{center}
\caption{Case Torrential $\rightarrow$ Torrential. In this case the two admissible regions $\Ncal^B$ and $\Pcal^B$ have empty intersection.}\label{fig:BtoB}
\end{figure}

\end{document}